\theoremstyle{plain}
\newtheorem{thm}{Theorem}
\newtheorem{cor}[thm]{Corollary}
\newtheorem{prop}[thm]{Proposition}
\newtheorem{lem}[thm]{Lemma}
\newtheorem*{mt}{Main Theorem}
\theoremstyle{definition}
\newtheorem{df}{Definition}
\theoremstyle{remark}
\newtheorem{rmk}{Remark}
\newtheorem*{1c}{First case}
\newtheorem*{2c}{Second case}
\newcommand{\ie}{\textit{i.e. }}
\newcommand{\cE}{\mathcal E}
\newcommand{\cL}{\mathcal L}
\newcommand{\cR}{\mathcal R}
\newcommand{\cF}{\mathcal F}
\newcommand{\ZZ}{\mathbb{Z}}
\newcommand{\p}{\mathbb{P}}
\newcommand{\R}{\mathcal{R}}
\newcommand{\NN}{\mathbb{N}}
\newcommand{\Gr}{\mathbb{G}}
\newcommand{\OO}{\mathcal{O}}
\newcommand{\II}{\mathcal{I}}
\newcommand{\abs}[1]{\lvert#1\rvert}
\newcommand{\gen}[1]{\langle#1\rangle}
\newcommand{\Z}{{\mathbb Z}}
\newcommand{\CC}{{\mathbb C}}
\newcommand{\de}{\partial}
\DeclareMathOperator{\cacl}{CaCl}
\DeclareMathOperator{\cod}{Codim}
\DeclareMathOperator{\pic}{Pic}
\DeclareMathOperator{\sym}{Sym}
\DeclareMathOperator{\Ext}{Ext}
\DeclareMathOperator{\im}{Im}
\DeclareMathOperator{\Cl}{Cl}
\def\cocoa{{\hbox{\rm C\kern-.13em o\kern-.07em C\kern-.13em o\kern-.15em A}}}
\DeclareMathOperator{\PGL}{\mathbb{P}GL}
\DeclareMathOperator{\proj}{Proj}
\DeclareMathOperator{\Div}{Div}
\begin{document}

\title{On subcanonical Gorenstein varieties and apolarity}
\author{Pietro De Poi \and Francesco Zucconi}

\thanks{}

\address{ 
Dipartimento di Matematica e Informatica\\
Universit\`a degli Stud\^\i\ di Udine\\
Via delle Scienze, 206\\
Loc. Rizzi\\
33100 Udine\\
Italy
}

\email{pietro.depoi@uniud.it} \email{francesco.zucconi@uniud.it}
\keywords{Gorenstein variety, Subcanonical varieties, apolarity.}
\subjclass[2000]{14J40, 14J45, 13H10, 14N05, 14J17}
\date{\today}

\begin{abstract} 
Let $X$ be a codimension $1$ subvariety of dimension 
$>1$ of a variety of minimal degree $Y$. If $X$ is
subcanonical with Gorenstein canonical 
singularities admitting a crepant resolution, then $X$ is 
Arithmetically Gorenstein and 
we characterise such subvarieties $X$ of $Y$ via apolarity as 
those whose apolar hypersurfaces are Fermat. 

\end{abstract}
\maketitle

\bibliographystyle{amsalpha}

\section{Introduction}
The Kodaira vanishing theorem does not extend to a normal Gorenstein
variety, see \cite[Section 3.3]{GR}. 
On the other hand, 
it is known that schemes with Gorenstein canonical singularities 
have a single 
sheaf, denoted by $\omega_X$, that fits perfectly for both duality 
and vanishing theorems, see \cite[Corollary 11.13]{Ko}. 

Then---even if in the context of Mori theory the Gorenstein assumption 
is too restrictive---to understand normal varieties with Gorenstein 
(strictly) canonical 
singularities is quite important and in this paper we prove some 
theorems on them. 

We recall that a variety is called \emph{$s$-subcanonical} if  
the dualising sheaf $\omega_X$ exists and $\cL^{\otimes s}$ is 
isomorphic
to $\omega_X$ where $s\in \Z$, 
$\cL=\phi^{*}\OO_{\p^N}(1)$, and  $\phi\colon X\rightarrow \p^N$ 
is an embedding. The ring 
$\cR:=\bigoplus_{i=0}^{\infty}H^{0}(X,\cL^{\otimes i})$ is 
known as the \emph{canonical ring} in the case $s=1$ and as the 
\emph{anticanonical ring} in the case $s=-1$. 
A huge amount of study has been devoted to these rings also in a more general setting. 
Here it is sufficient to recall \cite{AS} and \cite{G} only.

In this paper we extensively use the concept of \emph{arithmetically 
Cohen-Macaulay projective variety} (aCM for short), 
see Definition \ref{definizioneuno}, and the concept of \emph{arithmetically 
Gorenstein variety} (aG for short), see Definition \ref{arithmetic}.

We extend the results of \cite{G} with the use of the General Kodaira Vanishing, 
see \cite[Theorem 2.17]{Ko}.

We consider normal projective $n$-dimensional aCM varieties 
$X\subset\p^N$ with canonical Gorenstein singularities that are 
regular (i.e. $h^{1,0}(X)=0$) for which $\omega_X$ is base point 
free, the image of
the canonical map $\varphi_{\omega_X}$ has maximal dimension and 
$h^0(X,\omega_X)\ge n+2$. We show that for such varieties the 
canonical ring $\cR$ of $X$ is generated in degree $n$ unless the image 
of the canonical map is a variety of minimal degree, in which case 
 $\cR$ is generated by elements of degree at most $n+1$. 
The above results are also  
generalised to $s$-subcanonical varieties; see Proposition \ref{lem:green}. 

We also give conditions under which a projective variety 
$X\subset\p^N$ with canonical Gorenstein singularities is aG. We show 
(See Theorem \ref{primo} and Theorem \ref{secondo}) that this happens 
if $X$ is $s$-subcanonical and $\ell$-normal, for all $\ell$ with
$0\le \ell\le n+s-1$ and, if $s\geq 
0$, satisfies the additional condition that
$h^i(X,\OO_X(k))=0$ for $1\le i\le n-1$ and $0\le k \le s$.

%
%

We point out that
this result can be related also to \cite[Proposition 6.9]{Va}. 

In this paper we write that a variety $X$ is $s$-aG if it is aG and 
$s$-subcanonical, and we call \emph{$s$-subcanonically regular} a variety which is $s$-aG with 
Gorenstein canonical singularities. 
By Theorem~\ref{secondo} we can extend the approach of our previous works 
\cite{DZ1} and \cite{DZ2} to this class of varieties. 
We recall that F. Macaulay proved that an Artinian graded Gorenstein ring and so of
socle dimension $1$ and degree---say---$k$ can be
realised as $A=\frac{\CC[\de_0,\dotsc,
\de_m]}{F^\perp}$, where $F\in \CC[x_0,\dotsc, x_m]$ 
is a homogeneous polynomial of degree $k$ and $F^\perp:=
\{ D\in \CC[\de_0,\dotsc, 
\de_m]\vert\ D(F)=0 \}$, where  $\CC[\de_0,\dotsc,
\de_m]$ is the polynomial ring generated by the natural
derivations over $\CC[x_0,\dotsc,
x_m]$, see \cite{Mac} and \cite[Section 2.3]{ik}. We call $F$ the 
\emph{Macaulay polynomial} of $A$. Now let $X\subset \p^N$ be an 
$s$-subcanonically regular variety of 
dimension $n$, with homogeneous coordinate ring $S_X$. A choice of 
$n+1$ independent linear forms $\eta_0,\dotsc,\eta_n$, determines an 
Artinian graded Gorenstein ring $\frac{S_X}{(\eta_0,\dotsc,\eta_n)}$ of
socle 
of degree $s+n+1$. The Macaulay polynomial of 
this ring is a form $F_{\eta_0,\dotsc,\eta_n}$ 
of degree  $s+n+1$ in  $N-n$ variables. This defines a rational map:
\begin{equation}\label{eq:ac} 
\alpha_X\colon\Gr(m,N)\dashrightarrow H_{m,s+n+1}
\end{equation}
from the Grassmannian of $m$-planes in $\p^N$ (where $m=N-(n+1)$) to 
the space of homogeneous polynomials of degree $s+n+1$
in $\check\p^{N}$ modulo the action of $\PGL(m+1,\CC)$, sending the 
$m$-plane $(\eta_0=\ldots=\eta_n=0)$ to the orbit of $F_{\eta_0,\dotsc,\eta_n}$.

In this paper we prove natural generalisations of 
the celebrated Noether and Enriques-Petri-Babbage 
Theorems \cite[\S III.3]{ACGH} in the wider context of $s$-subcanonically regular
varieties:

\begin{mt} 
Let $(X,\cL)$ be a polarised $(k-1)$-dimensional variety,
such that $X\subset\abs{\cL}^\vee=:\check\p^N$ 
is an $s$-subcanonical variety  with crepant resolution and with $k> 2$,  $k+s>2$; then   
$X$ is contained as a codimension one subvariety in a rational normal scroll, or a quadric, or 
a cone on the Veronese surface $v_{2}(\p^{2})$ 
if and only if it is  $s$-subcanonically regular and 
for every $k$-tuple of general sections
$\eta_1,\dotsc, \eta_k\in H^{0}(X,\cL)$,
$F_{\eta_{1},\dotsc, \eta_{k}}\in \mathbb C[x_{0},\dotsc, x_{N-k}]$ 
is a Fermat hypersurface of degree $(s+k)$. 
\end{mt}

See Theorem \ref{miofratellos}. 
Main Theorem is the first step to study the geometry of an $s$-subcanonically 
regular variety of dimension $n$ via the behaviour of the rational 
map $\alpha_X\colon\Gr(m,N)\dashrightarrow H_{m,s+n+1}$. For a 
non-trivial example 
concerning the canonical curve case see \cite{BCN}. 
Moreover, we stress the fact that, contrary to the curve 
case---that is $k=2$---it follows, from Main Theorem, that  
given a variety $X$ of dimension $\geq 2$ with Gorenstein strictly 
canonical singularities, to be contained as
a divisor in a rational normal scroll 
and to be subcanonical forces $X$ to be aCM (and hence aG).

We think that 
the assumption that the resolution is crepant establishes an 
interesting link between the theory of singularities and the theory of apolarity. 
Finally we think 
that some of the geometry we have 
described could shed some light on some aspects of Artinian 
Gorenstein Rings, see \cite{cv}.

\section{Preliminaries}



In this paper we will work with projective varieties and schemes over the complex field $\CC$. For us, 
a \emph{variety} $X$ will always be irreducible but not necessarily smooth. 

\subsection{Generalisation of some results of Green}
Nowadays the results of \cite{G} are easily generalisable to many classes of 
varieties. For this work, we assume that $X$ is normal with Gorenstein 
canonical singularities. We will follow closely the exposition 
of \cite{G} indicating the changes to be made to adapt it to 
our case.

Let $\phi\colon X\rightarrow \p^N$ be a 
morphism and set $\cL=\phi^{*}\OO_{\p^N}(1)$. Since $\cL$ is base point free, 
following \cite[\S 2]{G}, we can form the 
exact sequence:
\begin{equation}\label{valutazione}
0\to Q^*_0\to H^0(X,\cL)\otimes\OO_{X} \to \cL\to 0
\end{equation}
naturally given by the evaluation map $H^0(X,\cL)\otimes\OO_{X}\to \cL$.
Then we see that the natural multiplication 
map $\mu_{d}\colon H^0(X,\cL)\otimes H^0(X,(d-1)\cL)\to H^0(X,d\cL)$ fits 
in the cohomology of the obvious sequence obtained from the sequence 
\eqref{valutazione}: 
\begin{multline*}
0\to H^0(X,Q^*_0\otimes(d-1)\cL) \to H^0(X,\cL)\otimes 
H^0(X,(d-1)\cL)\xrightarrow{\mu_d} H^0(X,d\cL)\to \\
\to H^1(X,Q^*_0\otimes(d-1)\cL) \to H^0(X,\cL)\otimes H^1(X,(d-1)\cL)\to\dotsb
\end{multline*}
Letting $C_d$ be the coker of $\mu_d$ we obtain that: 
\begin{equation*}
C_d\cong\ker(H^1(X,Q^*_0\otimes(d-1)\cL) \to H^0(X,\cL)\otimes H^1(X,(d-1)\cL)). 
\end{equation*} 
We are ready to prove:

\begin{prop}\label{lem:green}
Let $X$ be an $n$-dimensional variety with normal Gorenstein canonical 
singularities. Let $\cL$ be a line bundle on $X$ such that: 
\begin{enumerate}
\item $\cL^{\otimes s}\cong \omega_X$; 
\item $\abs{\cL}$ is base point free; 
\item the map associated to $\abs{\cL}$, $\varphi_{\abs{\cL}}\colon X\to \p^N$ 
is such that $\dim(\varphi_{\abs{\cL}}(X))=n$;
\item $\varphi_{\abs{\cL}}(X)$ is not a variety of minimal degree and has codimension at least two. 
\end{enumerate}
Then the ring $\cR:=\oplus_{d=0}^{+\infty}H^0(X,d\cL)$
is generated by elements of degree at most $n+s-1$ if 
$h^{1}(X,\OO_{X})=0$. 
\end{prop}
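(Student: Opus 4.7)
The plan is to adapt the Koszul-cohomology argument of \cite{G} to our singular setting, substituting Kollár's generalised vanishing \cite[Theorem 2.17]{Ko} wherever Green used the classical Kodaira vanishing; this substitution is legitimate because the Gorenstein canonical singularities assumed here are rational. First, from the formula displayed immediately before the proposition, $C_d = \coker \mu_d$ embeds into $H^1(X, Q_0^* \otimes (d-1)\cL)$, so it suffices to prove that this $H^1$ vanishes for every $d \ge n+s$; generation in degrees $\le n+s-1$ then follows by induction on $d$.

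Hypotheses (2) and (3) guarantee that a generic $(n+1)$-dimensional subspace $W \subset H^0(X,\cL)$ has sections with no common zero, yielding a rank-$n$ bundle $M_W$ and an exact Koszul complex
\[
0 \to \cL^{-n-1} \otimes \det W \to \cdots \to \cL^{-1} \otimes W \to \OO_X \to 0.
\]
Twisting by $\cL^d$ and breaking into syzygies $0 \to K_{i+1} \to \wedge^{i+1}W \otimes \cL^{d-i-1} \to K_i \to 0$, with $K_0 = \cL^d$ and $K_n = \det W \otimes \cL^{d-n-1}$, reduces the problem to showing $H^1(K_1)=0$, which, once $H^1(\cL^{d-1})=0$ (automatic from Koll\'ar's vanishing when $d-1>s$), coincides with $\coker \mu_d|_W$. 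A standard cohomology chase, together with the intermediate vanishings $H^i(X,\cL^{d-i-1})=0$ for $1 \le i \le n-1$, produces an injection
\[
H^1(K_1) \hookrightarrow H^n(K_n) = \det W \otimes H^n(X, \cL^{d-n-1}).
\]
These intermediate vanishings all follow from Koll\'ar's vanishing applied to $\cL^s \cong \omega_X$ (note that $\cL$ is big and semi-ample, as it is base point free with $n$-dimensional image), together with the hypothesis $h^1(\OO_X)=0$ and its Serre-dual consequence $H^{n-1}(\omega_X)=0$ for the Gorenstein variety $X$.

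The main obstacle, and the only place where hypothesis (4) plays a role, is to show that the surviving injection $H^1(K_1) \hookrightarrow \det W \otimes H^n(X, \cL^{d-n-1})$ is the zero map for $d \ge n+s$. For $d \ge n+s+2$ the target already vanishes by Serre duality applied to $\omega_X \cong \cL^s$, but the borderline cases $d=n+s$ and $d=n+s+1$ require a finer analysis: by Green's Koszul-cohomology duality, the image corresponds to a class in a group $K_{N-n,q}(X,\cL)^*$ that is classically known to vanish as soon as $\varphi_{|\cL|}(X)$ has codimension at least two and is not a variety of minimal degree, via a Castelnuovo-type argument exhibiting any non-zero such class as a linear syzygy characteristic of varieties of minimal degree. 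We transcribe this portion of \cite{G} essentially verbatim---Koll\'ar's vanishing being the only modification required---and conclude that $\mu_d$ is surjective for all $d \ge n+s$, which gives the desired generation statement for $\cR$.
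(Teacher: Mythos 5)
Your opening and closing moves are in the spirit of the paper (replace Kodaira by the general Kodaira/Koll\'ar vanishing, run Green's argument), but the central reduction is wrong. You replace the full evaluation bundle $Q_0^*$ by the Koszul complex of a \emph{generic $(n+1)$-dimensional subspace} $W\subset H^0(X,\cL)$ and claim it suffices to show $H^1(K_1)=\coker(\mu_d|_W)=0$ for all $d\ge n+s$. At the two borderline degrees this is simply false, so no amount of further argument can close it. For a non-hyperelliptic canonical curve ($n=s=1$, $d=2$) the base-point-free pencil trick gives
\begin{equation*}
\coker\bigl(W\otimes H^0(\omega_C)\to H^0(\omega_C^{\otimes 2})\bigr)\cong\ker\bigl(H^1(\OO_C)\to W\otimes H^1(\omega_C)\bigr),
\end{equation*}
of dimension $g-2>0$, and at $d=3$ the cokernel is $H^1(\omega_C)\cong\CC$; similarly, for a regular canonical surface with $p_g\ge 5$ and $d=3$ your own chase identifies $H^1(K_1)$ with $\bigl(H^0(\omega_X)/W\bigr)^*\otimes\det W\neq 0$. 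The point is that hypothesis (4) is about the image of the \emph{complete} system in $\p^N$, and it enters Green's proof only through his duality/vanishing theorem for the Koszul cohomology of the full system; once you project to a generic $W$, the image is $\p^n$, which \emph{is} a variety of minimal degree, so the vanishing of the group you call $K_{N-n,q}(X,\cL)^*$ has no bearing on $\coker(\mu_d|_W)$ --- and indeed that cokernel does not vanish. Surjectivity of the full $\mu_d$ at $d=n+s$ and $d=n+s+1$ cannot be detected by an $(n+1)$-dimensional subsystem.

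The paper avoids this by never leaving the full system: since $X$ is Cohen--Macaulay (Koll\'ar), Serre duality expresses $C_d^*$ as a quotient of $H^{n-1}(X,Q_0\otimes(s-d+1)\cL)$ by the image of $H^0(X,\cL)^*\otimes H^{n-1}(X,(s-d+1)\cL)$, and then one checks that the ingredients of Green's subtle vanishing (his Theorems 2.8, 1.3 and 2.14) survive the passage to normal Gorenstein canonical singularities, using the general Kodaira vanishing and the regularity hypothesis $h^1(X,\OO_X)=0$; with $p=1$, $k=n-1$ this gives $h^{n-1}(X,Q_0\otimes(1-n)\cL)=0$, hence $C_{n+s}=0$, and Green's Theorem 2.8 disposes of all $d>n+s$. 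If you want to repair your write-up, carry $Q_0$ (or all of $H^0(X,\cL)$) through the argument and verify, as the paper does, exactly which statements of \cite{G} need the vanishing theorem and which need only $h^1(X,\OO_X)=0$; the generic-subsystem Koszul complex can only handle the range $d\ge n+s+2$, where the statement is easy anyway.
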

\begin{proof} Since $X$ is Cohen-Macaulay, see \cite[Corollary 
   11.13]{Ko}, we can apply Serre Duality, and then 
\begin{equation*}
C_d^*\cong H^{n-1}(X,Q_0\otimes(s-d+1)\cL)/\im( H^0(X,\cL)^*\otimes 
H^{n-1}(X,(s-d+1)\cL)), 
\end{equation*}
The subtle vanishing theorem \cite[Theorem 2.14]{G} holds under our 
hypothesis too. In fact  \cite[Theorem 2.8]{G} holds for a variety 
with normal Gorenstein canonical 
singularities since the general Kodaira vanishing is applicable. Now
\cite[Theorem 1.3]{G} is obviously independent of any 
assumption on the singularities of $X$, while the crucial condition 
in the proof of \cite[Theorem 2.14]{G} is that $X$ is regular. 
Finally to apply our 
generalisation of \cite[Theorem 2.14]{G}, using the same notations as in \cite[Theorem 2.14]{G}, 
we have to put
$p=1$, $k=n-1$, $n=p+k$; so if 
$s-d+1=-k$, that is $d=s+n$, then 
the claim follows verbatim as in the proof of \cite[Theorem 3.9 (3)]{G}
since $h^{n-1}(X,Q_0\otimes(1-n)\cL)=0$ and therefore 
$C_{s+n}=0$.
\end{proof}

\begin{rmk} Notice that under the assumptions of Proposition 
    \ref{lem:green} we have $C_{d}=0$ if $d\geq s+n$ 
    as in the proof of \cite[Theorem 3.9 (3)]{G} since we can apply 
\cite[Theorem 2.8]{G} (2) to obtain $h^{n-1}(X,Q_0\otimes (-n)\cL)=0$ 
and \cite[Theorem 2.8]{G} (1) to obtain $h^{n-1}(X,Q_0\otimes 
(-m)\cL)=0$ where $m>n$.
\end{rmk}

\subsection{Arithmetically Gorenstein schemes}\label{ssec:ags}

Let us fix a closed subscheme $Z$ of $\p^N$ of dimension $n\ge1$ and a
system $x_0,\dotsc,x_N$ of projective coordinates. Let $\II_Z$
be the \emph{sheaf of ideals} of $Z$. The module
$M^r(Z) := \oplus_{t\in\Z} H^r (\p^N,\II_Z(t))$, $1\le r\le n$, 
is called the \emph{$r$-th Hartshorne-Rao module} of $Z$.

\begin{df}\label{definizioneuno}
Let $Z\subset\p^N$ be a a closed subscheme of dimension $n$. Then $Z$ is said to be
\emph{arithmetically Cohen-Macaulay} (\emph{aCM} for short) 
if $N-n$ is equal to the length of a minimal free
resolution of its homogeneous coordinate ring
\begin{equation*}
S_Z:=\frac{S}{I_{Z}}
\end{equation*}
as an $S$-module, where $S:=\CC[x_0,\dotsc,x_N]$ and 
$I_{Z} := \oplus_{t\in\Z} H^0 (\p^N,\II_Z(t))$ is the homogeneous ideal of $Z$. 
\end{df}
A zero-dimensional scheme is automatically aCM, see \cite[page 10]{Mig}. 

\begin{prop}\label{rmk:1}
A scheme $Z\subset\p^{N}$ of dimension $\ge 1$ is aCM if and only if  $M^r (Z)=0$ for all 
$1\leq r\leq n$.
\end{prop}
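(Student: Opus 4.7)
The plan is to translate the aCM condition into a depth condition on the homogeneous coordinate ring $S_Z$ and then recognise this via the standard comparison between local cohomology at the irrelevant ideal $\mathfrak m=(x_0,\dotsc,x_N)$ and the cohomology of the associated sheaves on $\p^N$.

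First, I would apply the graded Auslander--Buchsbaum formula to the finitely generated graded $S$-module $S_Z$, namely $\mathrm{pd}_S(S_Z)+\mathrm{depth}_{\mathfrak m}(S_Z)=N+1$. Since $\dim S_Z=n+1$ and $\mathrm{depth}_{\mathfrak m}(S_Z)\le\dim S_Z$ in general, Definition~\ref{definizioneuno} (minimal free resolution of length exactly $N-n$) is equivalent to $\mathrm{depth}_{\mathfrak m}(S_Z)=n+1$, i.e.\ to $S_Z$ being Cohen--Macaulay; and the latter in turn is equivalent to the vanishing $H^i_{\mathfrak m}(S_Z)=0$ for all $0\le i\le n$.

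Second, I would invoke the standard identification between the graded pieces of $H^i_{\mathfrak m}(S_Z)$ and sheaf cohomology on $\p^N$. Because the ideal $I_Z=\bigoplus_{t}H^0(\p^N,\II_Z(t))$ is saturated by definition, $S_Z$ has no $\mathfrak m$-torsion and $H^0_{\mathfrak m}(S_Z)=0$ automatically. Combining the four-term exact sequence
\begin{equation*}
0\to H^0_{\mathfrak m}(S_Z)\to S_Z\to \bigoplus_{t\in\Z}H^0(Z,\OO_Z(t))\to H^1_{\mathfrak m}(S_Z)\to 0
\end{equation*}
with the long exact cohomology sequence of $0\to\II_Z\to\OO_{\p^N}\to\OO_Z\to 0$ identifies $H^1_{\mathfrak m}(S_Z)$ with $M^1(Z)$. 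For $2\le i\le n$ one has the standard isomorphism $H^i_{\mathfrak m}(S_Z)_t\cong H^{i-1}(Z,\OO_Z(t))$; since $H^{i-1}(\p^N,\OO_{\p^N}(t))=H^i(\p^N,\OO_{\p^N}(t))=0$ in this range, the same long exact sequence produces $H^{i-1}(Z,\OO_Z(t))\cong H^i(\p^N,\II_Z(t))$. Summing over $t\in\Z$ yields $H^i_{\mathfrak m}(S_Z)\cong M^i(Z)$ for $1\le i\le n$.

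Putting the two steps together, $Z$ is aCM if and only if $H^i_{\mathfrak m}(S_Z)=0$ for every $0\le i\le n$, which by the identifications above is equivalent to the vanishing $M^r(Z)=0$ for all $1\le r\le n$. The main subtlety is the low-degree comparison: one must check that $H^0_{\mathfrak m}(S_Z)=0$ is built into the setup (via the saturation of $I_Z$) so that vanishing of $M^1(Z)$ corresponds exactly to $\mathrm{depth}(S_Z)\ge 2$ rather than being automatic. Since all of these identifications are completely standard, in the final write-up I would simply cite the relevant sections of \cite{Mig} rather than reproduce them in detail.
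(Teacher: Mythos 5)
Your proof is correct and is essentially the same as the paper's: the paper disposes of this proposition by citing \cite[1.2.2 and 1.2.3]{Mig}, and what you write out (graded Auslander--Buchsbaum reducing aCM to Cohen--Macaulayness of $S_Z$, plus the standard comparison of $H^i_{\mathfrak m}(S_Z)$ with the Hartshorne--Rao modules, with the saturation of $I_Z$ handling the $H^0_{\mathfrak m}$ term) is precisely the standard argument behind that citation. No gaps.
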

\begin{proof} See for example \cite[1.2.2 and 1.2.3]{Mig}. 
\end{proof}

\begin{df}\label{arithmetic}
Let $Z\subset\p^N$ be a closed subscheme. $Z$ is said to be \emph{arithmetically Gorenstein}
(\emph{aG} for short) if it is aCM 
and the last free module of a minimal free resolution of $S_Z$ has rank $1$. 
\end{df}

\begin{df}\label{df:can}
A subscheme $Z\subset\p^N$ is said to be \emph{subcanonical} if there exists 
an integer $s\in \Z$ such that $\omega_Z\cong\OO_Z(s)$, 
where $\omega_Z$ is the dualising sheaf of $Z$ (which exists by \cite[Proposition III.7.5]{H}). 
\end{df}

Note that an aCM scheme is aG if and only if it is subcanonical. The dualising 
sheaf of an $n$-dimensional projective scheme $Z\subset \p^{N}$ is
\begin{equation*} 
\omega_Z={{\cE}}xt_{\OO_{\p^{N}}}^{N-n}(\OO_{Z},\omega_{\p^{N}})
\end{equation*}
which is the  sheafification of the canonical model $\Ext_S^{N-n}(S_Z,S)(-N -1)$. 
By Serre's correspondence, if $Z$ is aCM then the canonical model of the 
last free module of the minimal free resolution of 
$S_Z$ has rank $1$ exactly when $\omega_Z\cong\OO_Z(s)$ for some 
$s\in \Z$. More precisely we have the following:
\begin{prop}\label{senzanome}
If $Z$ is an aCM closed subscheme of $\p^N$, then the following are equivalent:
\begin{enumerate}
\item $Z$ is aG;
\item \label{senzanome2} $\omega_Z\cong\OO_Z(s)$ for some integer $s$;
\item \label{senzanome3} the minimal free resolution of $S_Z$ is 
self-dual, up to a twist. 
\end{enumerate}
\end{prop}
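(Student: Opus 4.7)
The plan is to pass between the resolution of $S_Z$ and that of the (shifted) canonical module via dualisation, and then to read off the three statements from uniqueness of minimal resolutions. Let $c:=N-n$, and let
$$0 \to F_c \to F_{c-1} \to \cdots \to F_1 \to F_0=S \to S_Z \to 0 \qquad (*)$$
be the minimal graded free resolution of $S_Z$, which has length $c$ because $Z$ is aCM. Set $E:=\Ext^{c}_{S}(S_Z,S)$, so by the formula recalled just before the statement we have $\omega_Z\cong\widetilde{E(-N-1)}$.

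The key technical step is to apply $\Hom_{S}(-,S)$ to $(*)$. Since $S_Z$ is Cohen--Macaulay of dimension $n+1$, all the intermediate $\Ext^{i}_S(S_Z,S)$ vanish, and the dual complex is exact except at the rightmost slot, producing a minimal graded free resolution
$$0 \to F_0^{\vee} \to F_1^{\vee} \to \cdots \to F_{c-1}^{\vee} \to F_c^{\vee} \to E \to 0. \qquad (**)$$
Moreover, since $Z$ is aCM and $n\geq 1$, the natural map identifies $E$ with $\Gamma_{*}(\omega_Z)(N+1)$, so passing between the sheaf $\omega_Z$ and the graded module $E$ involves no correction.

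With $(*)$ and $(**)$ in hand, the implications are formal. For $(1)\Rightarrow(2)$: if $F_c\cong S(-a)$, then $(**)$ presents $E$ as a cyclic $S$-module; since $E$ is the canonical module of the Cohen--Macaulay ring $S_Z$ it is a faithful $S_Z$-module, so $\Ann_S(E)=I_Z$ and hence $E\cong S_Z(a)$, giving $\omega_Z\cong\OO_Z(a-N-1)$. For $(2)\Rightarrow(3)$: from $\omega_Z\cong\OO_Z(s)$ we obtain $E\cong S_Z(s+N+1)$, so shifting $(*)$ by $s+N+1$ yields a second minimal free resolution of $E$; comparing with $(**)$ and invoking uniqueness of minimal graded free resolutions gives $F_i^{\vee}\cong F_{c-i}(s+N+1)$, which is exactly self-duality up to a twist. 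The implication $(3)\Rightarrow(1)$ is immediate: self-duality forces $F_c^{\vee}$ to be a twist of $F_0=S$, hence $F_c$ has rank one and $Z$ is aG.

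The main subtlety I expect is making sure that $(**)$ really is a \emph{minimal} free resolution of $E$ and that $E$ is indeed faithful over $S_Z$; both facts rely crucially on $S_Z$ being Cohen--Macaulay of dimension $n+1\geq 2$ (so that duals of minimal maps remain minimal, and the canonical module is a faithful maximal Cohen--Macaulay module). Once these standard commutative algebra facts are invoked, the three equivalences are essentially a tautology of minimal graded free resolutions.
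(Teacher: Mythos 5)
Your argument is correct, but note that the paper offers no proof of its own here: it simply cites \cite[Proposition 4.1.1]{Mig}, and what you have written is essentially the standard proof behind that citation (dualise the minimal free resolution, use vanishing of the intermediate $\Ext^i_S(S_Z,S)$ from Cohen--Macaulayness to get a minimal resolution of the canonical module, then compare via uniqueness of minimal graded resolutions). The two facts you flag are indeed the right ones and hold in the paper's setting, where $\dim Z=n\ge 1$: both $S_Z$ and $E=\Ext^{N-n}_S(S_Z,S)$ have depth $n+1\ge 2$ at the irrelevant ideal, so they coincide with $\Gamma_*$ of their sheafifications, and faithfulness of the canonical module over the Cohen--Macaulay ring $S_Z$ is standard (e.g. from $S_Z\cong\Hom_{S_Z}(K_{S_Z},K_{S_Z})$ in \cite{BH}).
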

\begin{proof} 
See \cite[Proposition 4.1.1]{Mig}.
\end{proof}

\noindent We need the following:
\begin{df}
Let $Z\subset\p^N$ be a projective closed subscheme. We say that $Z$ is \emph{$j$-normal}, 
with $j\in \Z$, $j\ge 0$, if the natural restriction 
map
\begin{equation*}
H^0(\p^{N},\OO_{\p^N}(j))\to H^0(Z,\OO_Z(j))
\end{equation*}
is surjective. We say that $Z$ is \emph{projectively normal}
(\emph{PN} for short) if it is $j$-normal 
$\forall j\in \Z$, $j\ge 0$.  
\end{df}

The following is an easy characterisation of aCM schemes among 
\emph{PN} ones:

\begin{prop}\label{prop:pn} Let $X\subset\p^N$ be an $n$-dimensional 
    PN scheme.
   Then $X$ 
   is aCM if and only if 
\begin{equation*}
h^i(X,\OO_X(j)) =0,\quad\textup{for } 0< i < n \textup{ and } \forall 
j\in\Z.
\end{equation*}
\end{prop}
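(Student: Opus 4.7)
The plan is to translate Proposition \ref{rmk:1} into a statement about cohomology on $X$ itself, using the ideal sheaf sequence. Since $X$ is aCM if and only if $M^{r}(X)=\bigoplus_{t\in\Z} H^{r}(\p^{N},\II_{X}(t))=0$ for $1\le r\le n$, I would compare these groups to $H^{i}(X,\OO_{X}(t))$ via the twisted short exact sequence
\begin{equation*}
0\to \II_{X}(t)\to \OO_{\p^{N}}(t)\to \OO_{X}(t)\to 0.
\end{equation*}

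From its long exact sequence and the well-known fact that $H^{r}(\p^{N},\OO_{\p^{N}}(t))=0$ for $0<r<N$ and every $t\in\Z$, I would read off two things. First, for $r=1$ the sequence gives $H^{1}(\II_{X}(t))\cong\coker\bigl(H^{0}(\p^{N},\OO_{\p^{N}}(t))\to H^{0}(X,\OO_{X}(t))\bigr)$ for $t\ge 0$, and this cokernel is zero for every $t\ge 0$ by the very definition of projective normality; for $t<0$ the group $H^{0}(X,\OO_{X}(t))$ vanishes since $X$ is PN and hence in particular has $H^{0}(X,\OO_{X})=\CC$ obtained as the restriction of constants, so $M^{1}(X)=0$ is equivalent to PN. Second, for $2\le r\le n$ (which forces $n\le N-1$, the only interesting range since $X=\p^{N}$ is trivial), both neighbouring cohomology groups of $\OO_{\p^{N}}(t)$ vanish and the connecting map yields the isomorphism
\begin{equation*}
H^{r}(\p^{N},\II_{X}(t))\;\cong\; H^{r-1}(X,\OO_{X}(t)).
\end{equation*}

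Putting the two observations together, under the standing hypothesis that $X$ is PN we have $M^{1}(X)=0$ automatically, while $M^{r}(X)=0$ for $2\le r\le n$ is equivalent to $H^{r-1}(X,\OO_{X}(t))=0$ for all $t\in\Z$, i.e.\ to $H^{i}(X,\OO_{X}(j))=0$ for $1\le i\le n-1$ and all $j\in\Z$. By Proposition \ref{rmk:1} this is exactly the aCM property, proving both implications at once.

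I do not expect a serious obstacle here: the argument is a direct unwinding of the ideal-sheaf long exact sequence against the known cohomology of projective space. The only mildly delicate point is the bookkeeping in the range $r=1$ for $t<0$, which is handled by the elementary vanishing $H^{0}(\p^{N},\OO_{\p^{N}}(t))=0$ for $t<0$ together with the fact that for a PN subscheme of $\p^{N}$ the sections of $\OO_{X}(t)$ with $t<0$ come from $\p^{N}$ and hence vanish. The rest is purely formal.
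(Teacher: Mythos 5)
Your argument is essentially the paper's own proof: reduce via Proposition \ref{rmk:1} to the vanishing of the Hartshorne--Rao modules, then run the long exact sequence of $0\to \II_X(j)\to\OO_{\p^N}(j)\to\OO_X(j)\to 0$ against the vanishing of the intermediate cohomology of $\OO_{\p^N}(j)$, using PN to kill $H^1(\p^N,\II_X(j))$ and the resulting isomorphisms $H^{i+1}(\p^N,\II_X(j))\cong H^i(X,\OO_X(j))$ for $0<i<n$. Your explicit handling of the negative twists in $M^1(X)$ is, if anything, slightly more careful than the paper's, which simply asserts $h^1(\p^N,\II_X(j))=0$ for all $j\in\Z$ from PN.
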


\begin{proof} By  Proposition \ref{rmk:1}, $X$ 
    is an aCM-scheme iff $h^r(\p^N,\II_X(j))=0$ for every $j\in\Z$ and $1\leq r\leq n$.
    We consider the cohomology of the standard sequence for $X\subset\p^N$:

\begin{equation*}
0\to \II_X(j)\to\OO_{\p^N}(j)\to\OO_X(j)\to 0.
\end{equation*}
\noindent
Since $X$ is PN, then by  Bott's theorem it follows 
$h^{1}(\p^{N},\II_X(j))=0$ where $j\in\mathbb Z$. Using Bott's 
theorem again
it follows that $h^{i+1}(\p^{N},\II_X(j))=
h^{i}(X,\OO_X(j))$ where $0<i<n$ and $j\in\mathbb Z$. 
\end{proof}

Classically a variety $X$ is called \emph{irregular} if $h^1(X,\OO_X)>0$. 
\begin{cor}
let $X\subset\p^N$ be an $n$-dimensional irregular variety. If $n\ge 2$, $X$ cannot be aG. 
\end{cor}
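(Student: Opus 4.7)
The plan is to derive a contradiction directly from the definitions and the two propositions just established. Suppose for contradiction that $X\subset\p^N$ is aG with $n=\dim X\ge 2$. Then, by Definition~\ref{arithmetic}, $X$ is in particular aCM, and it suffices to contradict $h^1(X,\OO_X)>0$.

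First, I would translate the aCM hypothesis into cohomological vanishings on $\p^N$ using Proposition~\ref{rmk:1}: aCM means $H^r(\p^N,\II_X(t))=0$ for every $t\in\Z$ and every $1\le r\le n$. Taking $r=1$ and letting $t$ vary gives that $X$ is projectively normal, because the long exact sequence of
\begin{equation*}
0\to \II_X(t)\to \OO_{\p^N}(t)\to \OO_X(t)\to 0
\end{equation*}
combined with $h^1(\p^N,\II_X(t))=0$ yields surjectivity of $H^0(\p^N,\OO_{\p^N}(t))\to H^0(X,\OO_X(t))$ for all $t\ge 0$.

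Next, I would apply Proposition~\ref{prop:pn} (whose hypotheses are now satisfied) in the direction aCM $\Rightarrow$ intermediate cohomology vanishing: for every $j\in\Z$ and every $0<i<n$ one has $h^i(X,\OO_X(j))=0$. Since $n\ge 2$, the range $0<i<n$ contains $i=1$, so in particular $h^1(X,\OO_X(j))=0$ for all $j$; specialising to $j=0$ gives $h^1(X,\OO_X)=0$, contradicting the assumption that $X$ is irregular.

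There is essentially no obstacle here; the statement is a one-line consequence of Propositions~\ref{rmk:1} and \ref{prop:pn}. The only point worth flagging is why the hypothesis $n\ge 2$ is necessary: without it, the vanishing range $0<i<n$ is empty (the case of curves genuinely admits irregular aG examples, as already the canonical embedding of a smooth curve of genus $\ge 2$ shows).
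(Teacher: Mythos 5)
Your proof is correct and is exactly the argument the paper intends: the corollary is stated there without proof as an immediate consequence of Propositions~\ref{rmk:1} and \ref{prop:pn}, and your chain (aG $\Rightarrow$ aCM $\Rightarrow$ $M^1(X)=0$ $\Rightarrow$ PN, then the vanishing $h^i(X,\OO_X(j))=0$ for $0<i<n$, which contains $i=1$ since $n\ge 2$) is precisely how it follows. Your closing remark on why $n\ge 2$ is needed, with irregular aG curves as the genuine exception, is also consistent with the paper's setting.
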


In the case of subcanonical varieties we can get more than what is in 
Proposition~\ref{prop:pn}, 
thanks to the general Kodaira vanishing theorem and some 
generalisations of the results in \cite{G} proved above, see 
Proposition \ref{lem:green}.

\begin{thm}\label{primo}
Let $X\subset\p^N$ be a canonical $n$-dimensional $\ell$-normal 
variety where $1\le \ell \le n$. Assume that $X$ 
has normal Gorenstein canonical singularities and that $h^i(X,\OO_X)=0$ for 
all $1\le i\le n-1$.  
Then $X$ is aG.
\end{thm}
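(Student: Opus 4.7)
The plan is to show that $X$ is aCM; being subcanonical ($\omega_X\cong\OO_X(1)$, so $s=1$), Proposition \ref{senzanome} then yields aG. By Proposition \ref{prop:pn}, it suffices to establish intermediate cohomology vanishing $h^i(X,\OO_X(j))=0$ for $0<i<n$ and all $j\in\Z$, together with projective normality.

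For the cohomology vanishing I would split on the sign of $j$. For $j\geq 2$ write $\OO_X(j)=\omega_X\otimes\OO_X(j-1)$ and apply the General Kodaira Vanishing \cite[Theorem 2.17]{Ko} to the ample bundle $\OO_X(j-1)$. For $j=1$, Serre duality (valid because $X$ is Cohen--Macaulay by \cite[Corollary 11.13]{Ko}) gives $H^i(X,\omega_X)\cong H^{n-i}(X,\OO_X)^*$, which vanishes for $0<i<n$ by hypothesis; the case $j=0$ is the hypothesis itself; and for $j\leq-1$ Serre duality reduces the question to $H^{n-i}(X,\OO_X(1-j))$ with $1-j\geq 2$, already handled.

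For projective normality I must verify $\ell$-normality for every $\ell\geq 0$. It is automatic for $\ell=0$ and given by assumption for $1\leq\ell\leq n$, so the real content is $\ell\geq n+1$, which I would do by induction. If $\codim_{\p^N}X=1$ then $X$ is a hypersurface and is tautologically aG, so we may assume $\codim_{\p^N} X\geq 2$; moreover, the ampleness of $\omega_X\cong\OO_X(1)$ forbids $X$ from being a variety of minimal degree, whose canonical class is never positive. Hence Proposition \ref{lem:green} applies with $s=1$, and (as observed in the Remark following it) $C_d=0$ for every $d\geq n+1$, i.e.\ the multiplication
\[
\mu_d\colon H^0(X,\OO_X(1))\otimes H^0(X,\OO_X(d-1))\to H^0(X,\OO_X(d))
\]
is surjective for all such $d$. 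Now, assuming by induction that $X$ is $(\ell-1)$-normal with $\ell\geq n+1$, the commutative diagram
\[
\begin{CD}
H^0(\p^N,\OO(1))\otimes H^0(\p^N,\OO(\ell-1)) @>>> H^0(\p^N,\OO(\ell))\\
@VVV @VVV\\
H^0(X,\OO_X(1))\otimes H^0(X,\OO_X(\ell-1)) @>\mu_\ell>> H^0(X,\OO_X(\ell))
\end{CD}
\]
has surjective top map (multiplication of polynomials on $\p^N$), surjective left vertical map (by $1$- and $(\ell-1)$-normality), and surjective bottom map by the previous sentence; hence the right vertical map is surjective, proving $\ell$-normality.

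The main obstacle is this last step: invoking Proposition \ref{lem:green} requires ruling out both exceptional cases (codimension one and variety of minimal degree), and it is exactly the canonical hypothesis $\omega_X\cong\OO_X(1)$, with its ampleness, that makes this automatic.
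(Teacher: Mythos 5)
Your proof is correct and follows essentially the same route as the paper's: reduce to aCM via Proposition \ref{senzanome}, get the intermediate cohomology vanishing for $\OO_X(j)$ from the general Kodaira vanishing theorem plus Serre duality, and obtain $\ell$-normality for $\ell\ge n+1$ by induction from the surjectivity of the multiplication maps $\mu_\ell$ supplied by Proposition \ref{lem:green} with $s=1$. The only difference is cosmetic---your restriction diagram replaces the paper's diagram built from $\sym^{n+1}H^0(X,\omega_X)$---and you in fact make explicit two points the paper leaves tacit, namely the reduction to codimension at least two (hypersurfaces being trivially aG) and the exclusion of the minimal-degree case via ampleness of $\omega_X\cong\OO_X(1)$.
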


\begin{proof} Since $X$ is canonical then 
   $\OO_X(1)\cong {\omega_X}$. 
   Hence by Proposition \ref{senzanome} 
   we only need to show that $X$ is aCM.

First of all, without loss of generality we can suppose that $X$ has codimension at least two. 

The cohomology of $0\to \II_X(k)\to\OO_{\p^N}(k)\to\OO_X(k)\to 0$
gives:

\begin{equation}\label{solita}
0\to H^0(\p^{N},\II_X(k))\to H^0(\p^{N},\OO_{\p^N}(k))\to 
H^0(X,\OO_X(k))\to H^1(\p^{N}, \II_X(k))\to 0
\end{equation}
and  $H^i(X, \OO_X(k))\cong H^{i+1}(\p^{N},\II_X(k))$ for $0<i<n$ and for every 
$k\in\mathbb Z$. 
By the general Kodaira vanishing theorem,  $h^i(X,\OO_X(k))=0$ for 
$k<0$, $i<n$. Moreover, Serre duality holds and it gives:
\begin{equation*}
h^i(X,\OO_X(k))=h^{n-i}(X,\OO_X(1-k)) 
\end{equation*}
and therefore, since $h^i(X,\OO_X)=0$, we deduce $h^i(X,\OO_X(k))=0$ for 
$1\le i\le n-1$, 
$\forall k \in \Z$.

It remains to show that $X$ is PN. 
%
%
%
By our $\ell$-normality hypothesis, it remains to
prove that $h^1(\p^{N},\II_{X}(k))=0$, if $k> n$. 
Since $h^{1}(X,\OO_{X})=0$, we can apply Proposition \ref{lem:green} 
with $s=1$. Then the canonical ring of $X$ 
is generated in degree $n$. Now we show that $X$ is $(n+1)$-normal,
the case where $k\geq n+2$ works by induction on $k$ 
in the same vein. 
Since $\omega_{X}=\OO_{X}(1)$ and since we have assumed that $X$ 
is $n$-normal we can put the natural homomorphism 
$\sym^{n+1}H^0(X,\omega_X)\rightarrow H^0(X, \omega_X^{\otimes n+1})$ 
and the multiplication map 
$H^0(X,\omega_X)\otimes H^0(X,\omega_X^{\otimes n})\rightarrow 
H^0(X, \omega_X^{\otimes n+1})$ in the following exact commutative diagram:
\begin{equation*}
\begin{CD}
@.0@. 0@. @.\\
@.@AAA@AAA@.\\
@. \sym^{n+1}H^0(X,\omega_X)@>>> H^0(X,\omega_X^{\otimes n+1})@>>> 
H^1(\p^{N},\II_X(n+1))@>>> 0\\
@.@AAA@AAA@.\\
@. H^0(X,\omega_X)\otimes \sym^n 
H^0(X,\omega_X)@>>>H^0(X,\omega_X)\otimes H^0(X,\omega_X^{\otimes n})@>>> 0@. 
\end{CD}
\end{equation*}
obtained by \eqref{solita}.

Let $\xi\in H^1(\p^{N},\II_X(n+1))$. By surjectivity, there exists $\gamma\in 
H^0(X,\omega_X^{\otimes n+1})$ such that $\gamma\mapsto\xi$.
From the surjectivity in the second column, there exists an $\alpha\in 
H^0(X,\omega_X)\otimes \sym^n H^0(X,\omega_X)$ such that 
$\alpha\mapsto\gamma$. Therefore, by the surjectivity in the second row, 
there exists an $\bar\alpha\in H^0(X,\omega_X)\otimes \sym^n 
H^0(X,\omega_X)$ such that $\bar\alpha\mapsto\alpha$. Let 
$\bar\gamma\in \sym^{n+1}H^0(X,\omega_X)$ be the image of 
$\bar\alpha$. By commutativity, $\bar\gamma\mapsto\gamma$. Then 
by the exactness of the first row, $\xi$ must be zero. 
\end{proof}

Because 
of the history of the topic, we like to recall the following corollary 
of Theorem \ref{primo}:

\begin{cor}
Let $X\subset\p^N$ be a smooth canonical regular $2$-normal surface. 
Then $X$ is aCM and therefore is aG.
\end{cor}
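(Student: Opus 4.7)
The plan is simply to specialise Theorem~\ref{primo} to the case $n=2$, $s=1$, and verify that each of its hypotheses is either explicitly given or is automatic for a smooth regular canonical surface.

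First I would dispense with the local hypotheses. Smoothness of $X$ implies that it has normal Gorenstein canonical singularities in the vacuous sense (the singular locus is empty), so this assumption of Theorem~\ref{primo} is free. The assumption that $X$ is canonical means, by the paper's convention, that $\OO_{X}(1)\cong\omega_{X}$, which is precisely the $s=1$ subcanonical setup in which Theorem~\ref{primo} is formulated.

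Next I would match the cohomological and normality hypotheses. With $n=2$, the required vanishing $h^{i}(X,\OO_{X})=0$ for all $1\le i\le n-1$ collapses to the single equation $h^{1}(X,\OO_{X})=0$, and this is exactly the meaning of regularity (using Hodge symmetry $h^{1,0}=h^{0,1}$). Similarly, the condition that $X$ be $\ell$-normal for some $1\le\ell\le n$ is satisfied by the $2$-normality hypothesis upon taking $\ell=n=2$.

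With all the hypotheses of Theorem~\ref{primo} verified, the theorem gives directly that $X$ is aG, and then Definition~\ref{arithmetic} immediately implies that $X$ is aCM as well. I do not expect any real obstacle: the corollary is essentially a transcription of Theorem~\ref{primo} in the lowest dimension for which the statement is non-trivial. The only point worth flagging is the collapse of the vanishing range $1\le i\le n-1$ to the single condition $h^{1}(X,\OO_{X})=0$ when $n=2$, which is precisely why for surfaces one needs to assume only regularity and no further intermediate cohomology vanishing.
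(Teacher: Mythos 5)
Your route is the same as the paper's: the corollary is stated there with no separate argument, being exactly the specialisation of Theorem~\ref{primo} to $n=2$, $s=1$, and your verification of the local hypotheses (smoothness gives normal Gorenstein canonical singularities vacuously, ``canonical'' gives $\OO_X(1)\cong\omega_X$, regularity gives $h^1(X,\OO_X)=0$) is exactly what is needed.

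The one step I would not let pass as written is your treatment of the normality hypothesis. You read Theorem~\ref{primo} as requiring $\ell$-normality \emph{for some} $\ell$ with $1\le\ell\le n$ and discharge it by taking $\ell=n=2$. That is not what the theorem uses: in its proof the $\ell$-normality hypothesis is precisely what gives $h^1(\p^N,\II_X(k))=0$ for \emph{all} $k\le n$, and only the range $k>n$ is handled by the generation-in-degree-$n$ argument; the parallel statement of Theorem~\ref{secondo} (``$\ell$-normal, for all $\ell$ such that $1\le\ell\le n+s-1$'') confirms that the intended hypothesis is $\ell$-normality for every $\ell$ in the range. So for $n=2$ one needs $1$-normality (linear normality) in addition to $2$-normality. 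Under your stated convention that ``canonical'' only means $\OO_X(1)\cong\omega_X$, linear normality is not automatic and your verification is incomplete; the corollary is coherent because a canonical surface in the classical sense is embedded by the complete canonical system, hence linearly normal by construction (and this is evidently the paper's implicit reading). Either adopt that reading explicitly or add the $1$-normality check; with that fixed, the rest of your argument goes through and coincides with the paper's.
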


The result of Theorem \ref{primo} is also true in the  
$s$-subcanonical case with a mild extra hypothesis; but in this 
case the assumption on singularities is interesting.

\begin{thm}\label{secondo} Let $s\in\Z$.
   Let $X\subset\p^N$ be a $s$-subcanonical $n$-dimensional $\ell$-normal,
   for all $\ell$ such that $1\le \ell\le n+s-1$,
variety with normal Gorenstein canonical singularities. If $s\ge 0$ 
assume also that $h^i(X,\OO_X(k))=0$, for all $i,k\in \ZZ$  such that 
$1\le i\le n-1$ and $0\le k \le s$.  
Then $X$ is aG.
\end{thm}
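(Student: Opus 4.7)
The plan is to mirror the strategy of the proof of Theorem \ref{primo}, adjusting it to accommodate an arbitrary $s \in \Z$. Since $X$ is $s$-subcanonical, $\omega_X \cong \OO_X(s)$, so by the remark following Definition \ref{df:can} it suffices to prove that $X$ is aCM. In view of Proposition \ref{prop:pn}, this amounts to checking (a) that $X$ is projectively normal, and (b) that $h^i(X, \OO_X(k)) = 0$ for $0 < i < n$ and every $k \in \Z$.

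For (b) I would combine the general Kodaira vanishing theorem \cite[Theorem 2.17]{Ko} with Serre duality, the latter being available because the Gorenstein canonical singularities make $X$ Cohen-Macaulay by \cite[Corollary 11.13]{Ko}. Kodaira kills $h^i(X, \OO_X(k))$ for $i > 0$ as soon as $k > s$, and Serre duality gives $h^i(X, \OO_X(k)) = h^{n-i}(X, \OO_X(s - k))$. When $s < 0$ these two facts already cover every $k \in \Z$ in the intermediate range $0 < i < n$, because $k \le s$ forces $s - k > s$ on the dual side. When $s \ge 0$ one is left with the interval $0 \le k \le s$, which is exactly what the extra hypothesis of the statement supplies; Serre duality then dispatches negative $k$, since $s - k > s$.

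For (a) I would proceed by induction on $\ell$. The range $0 \le \ell \le n + s - 1$ is covered by hypothesis, with $\ell = 0$ being trivial since $X$ is irreducible. For $\ell \ge n + s$ I would reproduce the diagram chase of Theorem \ref{primo}:
\begin{equation*}
\begin{CD}
@.0@. 0@. @.\\
@.@AAA@AAA@.\\
@. \sym^{\ell}H^0(X,\cL)@>>> H^0(X,\ell\cL)@>>> H^1(\p^{N},\II_X(\ell))@>>> 0\\
@.@AAA@AAA@.\\
@. H^0(X,\cL)\otimes \sym^{\ell-1}H^0(X,\cL)@>>>H^0(X,\cL)\otimes H^0(X,(\ell-1)\cL)@>>> 0@.
\end{CD}
\end{equation*}
The middle column is surjective by the $(\ell-1)$-normality inductive hypothesis, while the bottom horizontal map is surjective because its cokernel $C_\ell$ vanishes for $\ell \ge n + s$ by Proposition \ref{lem:green} (and the subsequent remark). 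Chasing an arbitrary element $\xi \in H^1(\p^N, \II_X(\ell))$ upward through the diagram, exactly as in Theorem \ref{primo}, forces $\xi = 0$, closing the induction.

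The main subtlety, and the reason the statement splits into the cases $s \ge 0$ and $s < 0$, is producing the intermediate cohomology vanishing: when $s \ge 0$ the general Kodaira vanishing stops at $k > s$ and cannot by itself reach the gap $0 \le k \le s$, which is precisely what the extra assumption compensates for. Two minor points already encountered in Theorem \ref{primo} must also be addressed when invoking Proposition \ref{lem:green}: one reduces without loss of generality to $\codim(X) \ge 2$, and one separates off the case where $\varphi_{\abs{\cL}}(X)$ is a variety of minimal degree, which is aCM by classical results.
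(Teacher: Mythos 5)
Your proposal is correct and follows essentially the same route as the paper: the paper's own proof of Theorem~\ref{secondo} is just the statement that the argument of Theorem~\ref{primo} goes through with the index shifts caused by $\omega_X\cong\OO_X(s)$, and your writeup spells out exactly those shifts (Kodaira vanishing for $k>s$, Serre duality $k\mapsto s-k$, the extra hypothesis covering $0\le k\le s$ when $s\ge0$, and $C_\ell=0$ for $\ell\ge n+s$ from Proposition~\ref{lem:green} feeding the same diagram chase). The caveats you flag at the end (reduction to codimension $\ge 2$, the minimal-degree image case) are likewise present, and treated no more carefully, in the paper's proof of Theorem~\ref{primo}.
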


\begin{proof} The proof is identical to the proof of 
   Theorem \ref{primo} except that in this case 
   $\omega_{X}=\OO_{X}(s)$ and this causes some trivial shifts in 
   the indices.
\end{proof}

We give the following:
\begin{df}\label{subcanonicallyreg}
   Let $s\in\Z$. An $s$-aG variety $X\subset\p^N$  with 
   normal Gorenstein canonical singularities will be called an  
\emph{$s$-subcanonically regular variety}. 
\end{df}

Thanks to Theorem~\ref{secondo}, being an $s$-subcanonically
regular variety is equivalent to requiring that 
 for all $1\le i\le \dim(X)-1$ and for all 
   $0\le k \le s$, $h^i(X,\OO_X(k))=0$ and $h^1(\p^N,\II_X(\ell))=0$,
   for all $1\le \ell\le \dim(X)+s-1$.


\section{On Subcanonically regular varieties}

Theorem~\ref{secondo} makes it possible to extend the results of \cite{DZ1} and 
\cite{DZ2} to some $s$-subcanonically regular varieties; see also the 
introduction.

To do this, we recall a few facts about apolarity theory.

\subsection{Apolarity} An excellent reference for  
apolarity is the notes by A. Geramita, \cite{Ger}.
Let $S:=\CC[x_0,\dotsc, x_N]$ be the polynomial ring in $(N+1)$-variables. 
The algebra of partial derivatives on $S$, 
\begin{equation*}
T:=\CC[\de_0,\dotsc,\de_N],\qquad \de_i:=\frac{\de}{\de_{x_i}},
\end{equation*}
acts on the monomials by:
\begin{equation*}
\de^a \cdot x^b=
\begin{cases}a!\binom{b}{a}x^{b-a} & \text{if $b\ge a$}\\
0 & \text{otherwise}
\end{cases}
\end{equation*}
where $a,b$ are multiindices, $\binom{b}{a}=\prod_i\binom{b_i}{a_i}$, 
$a!:=\prod_i a_{i}!$ etc. 

We can think of $S$ as the algebra of partial derivatives on $T$ 
by defining
\begin{equation*}
x^a \cdot \de^b=
\begin{cases}a!\binom{b}{a}\de^{b-a} & \text{if $b\ge a$}\\
0 & \text{otherwise.}
\end{cases}
\end{equation*}

These actions define a perfect pairing between the 
forms of degree $d$ in $S$ 
and $T$, $\forall d\in\NN$:  
\begin{equation*}
S_d\times T_d\xrightarrow{\cdot} \CC.
\end{equation*}
Indeed, this is nothing but the extension of the duality between vector spaces: 
if $V:=S_1$, then $T_1=V^*$. 

This perfect 
paring shows the natural duality between $\p^N:=\proj(S)$ and 
$\check\p^N=\proj(T)$. 
More precisely, if $(c_0,\dotsc, c_N)\in\check\p^N$, this gives 
$f_c:=\sum_i c_i x_i\in S_1$, and if $D\in T_a$, 
\begin{equation*}
D \cdot f_c^b=
\begin{cases}a!\binom{b}{a}D(c)f_c^{b-a} & \text{if $b\ge a$}\\
0 & \text{otherwise}.
\end{cases}
\end{equation*}
in particular, if $b\ge a$
\begin{equation*}\label{eq:lin}
0=D \cdot f_c^b \iff D(c)=0
\end{equation*}
\noindent where $D(c)$ is the value of the polynomial $D$ at 
$c:=(c_{0},\ldots, c_{N})$.

\begin{df}
We say that two forms, $f\in S$ and $g\in T$ are \emph{apolar} if 
\begin{equation*}
g \cdot f =f\cdot g =0.
\end{equation*}
\end{df}

Let $f\in S_d$ and $F:=V(f)\subset\p^N$ the corresponding hypersurface; 
let us now define 
\begin{equation*}
F^\perp:=\{D\in T\mid D\cdot f=0 \}
\end{equation*}
and
\begin{equation*}
A^F:=\frac{T}{F^\perp}.
\end{equation*}

\begin{lem}
The ring $A^F$ is Artinian Gorenstein of socle 
of 
degree $d$. 
\end{lem}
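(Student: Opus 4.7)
The plan is to establish three facts in turn: that $A^F$ is finite dimensional as a $\CC$-vector space, that its top nonzero graded piece $A^F_d$ is one-dimensional, and that this top piece coincides with the whole socle. First I would observe that the action $T\times S\to S$ is bihomogeneous, so $F^\perp$ is a homogeneous ideal and $A^F$ inherits a natural grading. For $i>d$ every $D\in T_i$ satisfies $D\cdot f=0$ for degree reasons, hence $T_{>d}\subseteq F^\perp$ and $A^F=\bigoplus_{i=0}^{d}A^F_i$ is finite dimensional, i.e.\ Artinian. For the top piece, the evaluation $T_d\to S_0=\CC$, $D\mapsto D\cdot f$, sends $\de^a$ to $a!\,c_a$ when $f=\sum_a c_a x^a$; choosing any $a$ with $c_a\ne 0$ makes this map surjective, so $A^F_d\cong\CC$.

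The main step is to promote the bilinear form recalled in the preamble to a perfect pairing
\begin{equation*}
A^F_i\times A^F_{d-i}\longrightarrow A^F_d\cong\CC,\qquad (\bar D,\bar E)\mapsto\overline{DE},
\end{equation*}
induced by multiplication in $T$. If $\bar D\in A^F_i$ pairs to zero with every $\bar E\in A^F_{d-i}$, then $DE\cdot f=0$ for all $E\in T_{d-i}$. Writing $DE\cdot f=E\cdot(D\cdot f)$ with $D\cdot f\in S_{d-i}$, the perfectness of the pairing $T_{d-i}\times S_{d-i}\to\CC$ forces $D\cdot f=0$, hence $\bar D=0$; by the symmetric argument on the other side the pairing is non-degenerate.

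From this the socle statement follows. The inclusion $A^F_d\subseteq\mathrm{soc}(A^F)$ is immediate because $A^F_{>d}=0$. Conversely, if $\bar D\in A^F_i$ with $i<d$ lies in the socle, then $\de_j\bar D=0$ in $A^F_{i+1}$ for every $j$. Writing an arbitrary $E\in T_{d-i}$ as $E=\sum_j \de_j E_j$ with $E_j\in T_{d-i-1}$ yields
\begin{equation*}
\bar E\cdot\bar D=\sum_j \bar{E_j}\cdot(\de_j\bar D)=0
\end{equation*}
in $A^F_d$, and non-degeneracy of the pairing forces $\bar D=0$. Hence $\mathrm{soc}(A^F)=A^F_d\cong\CC$, so $A^F$ is Artinian Gorenstein with socle concentrated in degree $d$. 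The only delicate step is this last one: it is essentially the graded form of Macaulay's inverse-systems theorem already quoted in the introduction, and its content lies entirely in the perfectness of the pairings $T_k\times S_k\to\CC$.
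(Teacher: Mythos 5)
Your argument is correct and complete. Note, though, that the paper does not prove this lemma at all: its ``proof'' is a citation to Iarrobino--Kanev \cite[\S 2.3, p.~67]{ik}, so what you have written is a self-contained replacement for that reference rather than a variant of an argument in the paper. Your route is the standard inverse-systems one, and it is essentially the content of the cited source: homogeneity of $F^\perp$ and the vanishing of $T_{>d}\cdot f$ give Artinianness, the evaluation $T_d\to\CC$, $D\mapsto D\cdot f$, gives $A^F_d\cong\CC$, and the multiplication pairing $A^F_i\times A^F_{d-i}\to A^F_d$ is non-degenerate because $(DE)\cdot f=E\cdot(D\cdot f)$ reduces it to the perfect pairing $T_{d-i}\times S_{d-i}\to\CC$ recalled in the preliminaries; writing any $E\in T_{d-i}$, $i<d$, as $\sum_j\de_jE_j$ then pins the socle to degree $d$. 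Two small points you leave implicit and could state: the socle is a graded submodule (the annihilator of the graded maximal ideal), which is what licenses checking only homogeneous elements $\bar D\in A^F_i$; and $A^F_0=\CC$ because $1\cdot f=f\neq 0$, so the ring is indeed graded local. With those remarks your proof stands on its own; the only thing the paper's citation buys is brevity.
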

\begin{proof}
See \cite[\S 2.3 page 67]{ik}.
\end{proof}

\begin{df}
$A^F$ is called the \emph{apolar} Artinian Gorenstein ring of $F$.
\end{df}

The \emph{Macaulay Lemma} asserts that any Artinian Gorenstein ring of socle of degree $d$ 
is apolar to a hypersurface of degree $d$; 
more precisely:  
\begin{lem}
The map 
\begin{equation*}
F\mapsto A^F
\end{equation*}
is a bijection between the hypersurfaces $F\subset\p^N$ of degree 
$d$ and graded Artinian Gorenstein quotient rings 
\begin{equation*}
A:=\frac{T}{I}
\end{equation*}
with socle of degree $d$. 
\end{lem}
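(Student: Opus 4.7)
The plan is to construct an explicit inverse to the map $F \mapsto A^F$, whose well-definedness on hypersurfaces (independent of the scalar representative of $F$) is clear since $F^\perp = (\lambda F)^\perp$ for $\lambda\ne 0$. Given a graded Artinian Gorenstein quotient $A = T/I$ with (necessarily one-dimensional) socle concentrated in degree $d$, the subspace $I_d \subset T_d$ is a hyperplane, and by non-degeneracy of the perfect pairing $S_d \times T_d \to \CC$ there is a unique one-dimensional subspace of $S_d$ whose elements annihilate $I_d$. I would pick a generator $F\in S_d$ of this subspace; the candidate inverse sends $A$ to the hypersurface $V(F)\subset\p^N$. Once we prove $F^\perp = I$, injectivity of $F\mapsto A^F$ is automatic: if $F^\perp = G^\perp$, then $F$ and $G$ both span the one-dimensional annihilator of the common hyperplane $(F^\perp)_d\subset T_d$, so they differ by a nonzero scalar and define the same hypersurface.

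I would then verify $F^\perp = I$ degree by degree. In degrees $e > d$, Artinianness together with the socle condition forces $I_e = T_e$, while every differential operator of order strictly greater than $d$ annihilates the degree-$d$ form $F$, so $(F^\perp)_e = T_e$ as well. In degrees $e \le d$, the central identity is the associativity $E(D\cdot F) = (ED)(F)$ for $D,E \in T$, which merely reflects the fact that both sides describe the same iterated partial differentiation of $F$. If $D \in I_e$ and $E \in T_{d-e}$, then $ED \in I_d$ since $I$ is an ideal, and hence $(ED)(F) = 0$ by the defining property of $F$. Thus $D\cdot F \in S_{d-e}$ pairs trivially with all of $T_{d-e}$, and the non-degeneracy of the pairing forces $D\cdot F = 0$; this yields $I \subseteq F^\perp$.

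The reverse inclusion $(F^\perp)_e \subseteq I_e$ for $e \le d$ is where the Gorenstein hypothesis has to do real work, and this is the main obstacle. Given $D \in (F^\perp)_e$, the associativity again gives $(ED)(F) = E(D\cdot F) = 0$ for every $E \in T_{d-e}$, so $ED$ annihilates $F$ and therefore lies in the hyperplane $I_d$. Passing to $A$, this says that the class $\bar D \in A_e$ multiplies $A_{d-e}$ into $0 \in A_d \cong \CC$. The standard characterisation of a graded Artinian algebra as Gorenstein with socle in top degree $d$ is exactly that all the multiplication pairings $A_e \times A_{d-e} \to A_d \cong \CC$ are perfect; this is the step that would fail for a general graded Artinian algebra with one-dimensional top component, and it is what genuinely uses the Gorenstein assumption rather than merely the socle-rank-one assumption. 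The perfectness forces $\bar D = 0$, i.e.~$D \in I_e$, so $F^\perp \subseteq I$. Combining the two inclusions gives $F^\perp = I$, and together with the injectivity argument above this shows that $F\mapsto A^F$ is a bijection onto the claimed set.
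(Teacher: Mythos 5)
Your argument is correct. Note, though, that the paper offers no proof of this lemma at all: it simply cites Iarrobino--Kanev \cite[Lemma 2.12]{ik}, and what you have written is essentially the standard Macaulay duality argument contained in that reference --- construct the inverse by taking the one-dimensional annihilator of the hyperplane $I_d\subset T_d$ under the perfect pairing $S_d\times T_d\to\CC$, and verify $F^\perp=I$ degree by degree, with the inclusion $(F^\perp)_e\subseteq I_e$ resting on the perfectness of the multiplication pairings $A_e\times A_{d-e}\to A_d$. Two small remarks. First, that perfectness is the one nontrivial input you invoke without proof; it is standard, but since it carries the whole weight of the converse inclusion it deserves at least the one-line argument (a nonzero $\bar D\in A_e$ with $\bar D A_{d-e}=0$ generates, in top degree of $\bar D A$, a socle element outside degree $d$). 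Second, your side remark that the step ``genuinely uses the Gorenstein assumption rather than merely the socle-rank-one assumption'' is misstated: for a graded Artinian algebra, Gorenstein is \emph{equivalent} to the socle being one-dimensional, and both are equivalent to the perfectness of all the pairings $A_e\times A_{d-e}\to A_d$; the genuinely weaker condition for which the step fails is merely having $\dim_\CC A_d=1$, i.e.\ a one-dimensional top-degree component, which does not control the socle in lower degrees. Neither point affects the validity of the proof.
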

\begin{proof}
See \cite[Lemma 2.12 page 67]{ik}.
\end{proof}

If $A=A^F$ then the polynomial $F$ is called the \emph{Macaulay polynomial} 
of $A$.

\subsubsection{Varieties of sum of powers}

Consider a hypersurface $F=V(f)\subset\p^N$ of degree $d$. 
\begin{df}
A subscheme $\Gamma\subset\check\p^N$ is said to be \emph{apolar to $F$} if
\begin{equation*}
I(\Gamma)\subset F^\perp.
\end{equation*}
\end{df}

The \emph{Apolarity Lemma} holds: 

\begin{lem}\label{lem:apo} Let us consider the linear forms
    $\ell_1,\dotsc,\ell_s\in S_1$ and 
let us denote by 
$L_1,\dotsc,L_s\in\check\p^N$ the corresponding points in the dual space and by 
$\Gamma:=\{L_1,\dotsc,L_s\}\subset \check\p^N$ the corresponding zero-dimensional subscheme. 
Then 
\begin{equation*}
\Gamma\ \textup{is apolar to}\ 
F 
= 
V(f),
\iff \exists \lambda_1,\dotsc,\lambda_s \in \CC^*\ \textup{such that}\ f=\lambda_1\ell_1^d+\dotsc+\lambda_s\ell_s^d
\end{equation*}
If $s$ is minimal, then it is called the 
\emph{Waring number} of $F$. 
\end{lem}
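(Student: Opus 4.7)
The plan is to prove the biconditional via the perfect pairing $S_d \times T_d \to \CC$ combined with the explicit formula for the action of a differential operator on a power of a linear form. First I would record the key identity: if $\ell = \sum_j c_j x_j \in S_1$ corresponds to the point $L = (c_0,\dotsc,c_N) \in \check\p^N$, and if $D \in T_a$, then
\begin{equation*}
D \cdot \ell^b = a!\binom{b}{a} D(L)\,\ell^{b-a}
\end{equation*}
when $b \geq a$ and $D \cdot \ell^b = 0$ otherwise. This is just the displayed formula of the excerpt specialised to $f_c = \ell$. In particular, for $a \leq d$ the operator $D$ annihilates $\ell^d$ exactly when $D(L) = 0$, while every $D$ of degree strictly greater than $d$ annihilates $\ell^d$ automatically.

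For the ``$\Leftarrow$'' implication, assume $f = \sum_i \lambda_i \ell_i^d$ and pick any homogeneous $D \in I(\Gamma)$ of some degree $a$. If $a > d$ then each summand $D \cdot \ell_i^d$ vanishes by the identity; if $a \leq d$ then $D(L_i) = 0$ for every $i$ because $D \in I(\Gamma)$, so again $D \cdot \ell_i^d = 0$. Thus $D \cdot f = 0$, and since this holds for every homogeneous element of $I(\Gamma)$, we obtain $I(\Gamma) \subset F^\perp$.

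For the ``$\Rightarrow$'' implication, I would look only at the degree-$d$ component. Via the perfect pairing, $f \in S_d$ is identified with a linear functional on $T_d$, and the hypothesis $I(\Gamma)_d \subset F^\perp_d$ says precisely that this functional vanishes on $I(\Gamma)_d$. Consider the evaluation morphism
\begin{equation*}
\mathrm{ev}_\Gamma \colon T_d \to \CC^s,\qquad D \mapsto (D(L_1),\dotsc,D(L_s)),
\end{equation*}
whose kernel is exactly $I(\Gamma)_d$. By standard linear algebra, $f$ must lie in the image of the transpose $\mathrm{ev}_\Gamma^{*} \colon (\CC^s)^{*} \to T_d^{*} \cong S_d$. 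A direct computation using the key identity above shows that, up to the scalar $d!$, this transpose sends the $i$-th standard dual basis vector to $\ell_i^d$. Hence $f$ lies in the span of $\ell_1^d,\dotsc,\ell_s^d$, producing the desired expression (after discarding, if necessary, those $\ell_i$ whose coefficient happens to vanish).

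The main obstacle is the careful identification of the transpose of $\mathrm{ev}_\Gamma$ with the ``sum of $d$-th powers'' map under the perfect pairing: one must verify that for $D \in T_d$ and $(\mu_1,\dotsc,\mu_s) \in (\CC^s)^{*}$ the equality $\sum_i \mu_i D(L_i) = D \cdot \bigl(\tfrac{1}{d!}\sum_i \mu_i \ell_i^d\bigr)$ really matches up the two pairings. Once this compatibility is checked, everything else reduces to routine duality for zero-dimensional subschemes of $\check\p^N$ and to the automatic vanishing $D \cdot \ell_i^d = 0$ for $\deg D > d$, which makes the higher-degree pieces of $I(\Gamma)$ irrelevant.
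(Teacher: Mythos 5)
Your argument is correct, and it is worth noting that the paper itself offers no proof of this lemma: it simply cites Iarrobino--Kanev, Lemma 1.15. Your self-contained duality argument is in substance the standard proof found there: the ``$\Leftarrow$'' direction via the explicit action of an operator on powers of linear forms, and the ``$\Rightarrow$'' direction by viewing $f$, through the perfect pairing $S_d\times T_d\to\CC$, as a functional on $T_d$ vanishing on $I(\Gamma)_d=\ker(\mathrm{ev}_\Gamma)$, then identifying the transpose of $\mathrm{ev}_\Gamma$ with the span of $\ell_1^d,\dotsc,\ell_s^d$ via $D\cdot\ell_i^d=d!\,D(L_i)$ for $D\in T_d$; your factorisation through the image of $\mathrm{ev}_\Gamma$ (extending a functional from the image to all of $\CC^s$) is the right linear algebra and requires no surjectivity of $\mathrm{ev}_\Gamma$. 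One point should be made explicit: your proof yields only $f\in\langle \ell_1^d,\dotsc,\ell_s^d\rangle$, i.e.\ coefficients in $\CC$ that may vanish, and ``discarding'' the $\ell_i$ with zero coefficient replaces $\Gamma$ by a proper subset. This is not a defect of your argument but an imprecision in the statement as quoted: with all $\lambda_i$ required to lie in $\CC^*$ the ``$\Rightarrow$'' implication is literally false --- take $\Gamma=\{L_1,L_2\}$ with $\ell_1,\ell_2$ independent and $f=\ell_1^d$; then $I(\Gamma)\subset I(\{L_1\})\subset F^\perp$, so $\Gamma$ is apolar to $F$, yet $f$ admits no expression $\lambda_1\ell_1^d+\lambda_2\ell_2^d$ with both $\lambda_i\neq 0$. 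The correct formulation, which is what Iarrobino--Kanev prove and what your argument actually establishes, replaces the right-hand side by ``$f$ lies in the linear span of $\ell_1^d,\dotsc,\ell_s^d$''; this weaker form is also all that is used later in the proof of Theorem~\ref{miofratellos}.
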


\begin{proof}
See \cite[Lemma 1.15 page 12]{ik}.
\end{proof}


\subsection{Macaulay polynomials of $s$-subcanonically regular 
varieties}
In \cite{DZ1}, 
we studied the special case of the canonical curve 
$C\subset\check\p^{g-1}$ of the map introduced in \eqref{eq:ac}.
In fact it is a well-known result that $C$ is aG (see \cite[page 117]{ACGH}).
Therefore, if we take two general linear forms 
$\eta_1,\eta_2\in ({\R_C})_1=H^0(\omega_C)$, then 
$T:= \frac{\R_C}{\gen{\eta_1,\eta_2}}$ 
is Artinian Gorenstein, and its values of the Hilbert function are 
$1, g-2, g-2, 1$. In particular, the socle degree of $T$ is 
$3$, and by the Macaulay Lemma, this defines a hypercubic in 
$\proj(T^*)$. In this way we obtain the rational map 
$\alpha_C\colon\Gr(g-3,g-1)\dashrightarrow H_{g-3,3}$.

\begin{thm}\label{thm:4s}
Let $X\subset\p^{N}$ be an $s$-subcanonically regular variety of 
dimension $n$. 
Let $\eta_0,\dotsc,\eta_n$ be $n+1$ general linear forms on $\p^N$. 
Then the graded $\CC$-algebra 
$A:=\frac{S_X}{\gen{\eta_0,\dotsc,\eta_n}}$ 
is of degree $s+n+1$. 
\end{thm}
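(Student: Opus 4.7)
The plan is to establish that $A$ is Artinian Gorenstein and then compute its socle degree via Hilbert series. Since $X$ is $s$-subcanonically regular, Definition \ref{subcanonicallyreg} says it is aG, so by Proposition \ref{senzanome} the coordinate ring $S_X$ is Cohen--Macaulay Gorenstein of Krull dimension $n+1$. Hence, for general linear forms $\eta_0,\dotsc,\eta_n$, their images in $S_X$ form a regular sequence of maximal length, so $A=S_X/(\eta_0,\dotsc,\eta_n)$ is Artinian. Since the quotient of a graded Gorenstein ring by a regular sequence of homogeneous elements is again Gorenstein, $A$ is Artinian Gorenstein, with socle concentrated in a single degree $d$, to be identified.

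To compute $d$, I would analyse the minimal free resolution of $S_X$ as a graded $S$-module with $S=\CC[x_0,\dotsc,x_N]$:
\begin{equation*}
0 \to F_{N-n} \to F_{N-n-1} \to \dotsb \to F_0 = S \to S_X \to 0.
\end{equation*}
The aG hypothesis forces $F_{N-n}=S(-c)$ for a single integer $c$. The key identification is $c=s+N+1$: combining $\omega_X\cong\OO_X(s)$ with the description of $\omega_X$ as the sheafification of $\Ext^{N-n}_S(S_X,S)(-N-1)$ recalled after Definition \ref{df:can}, the canonical twist in the last free module must be exactly $s+N+1$. By the self-duality of the resolution (item (3) of Proposition \ref{senzanome}), this $c$ is also the unique maximal twist appearing anywhere in the resolution.

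To extract the socle degree, I would write the Hilbert series in the standard form $H(S_X,t)=Q(t)/(1-t)^{n+1}$. Since the $\eta_i$'s form a regular sequence of linear forms, the Hilbert series of $A$ is exactly $Q(t)$, so $d=\deg Q$. On the other hand, from the alternating sum of Hilbert series coming from the resolution one obtains $Q(t)(1-t)^{N-n}=\sum_i (-1)^i P_i(t)$, where $P_i(t)=\sum_j t^{b_{ij}}$ encodes the twists in $F_i$. The right-hand side has degree equal to the unique maximal twist $c=s+N+1$, so $\deg Q=(s+N+1)-(N-n)=s+n+1$, proving the claim.

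The main obstacle is the verification of $c=s+N+1$: although essentially standard, it requires careful bookkeeping of the normalisations relating the graded canonical module $\Ext^{N-n}_S(S_X,S)$ to the sheaf $\omega_X$, together with the shift $-N-1$ coming from $\omega_{\p^N}=\OO_{\p^N}(-N-1)$. Everything else reduces to basic facts about regular sequences in Cohen--Macaulay Gorenstein graded rings.
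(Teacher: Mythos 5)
Your proposal is correct, and it diverges from the paper's argument precisely at the computation of the socle degree. The first half is identical: both you and the paper observe that $S_X$ is Gorenstein (via Theorem~\ref{secondo} and Proposition~\ref{senzanome}), that the general linear forms give a maximal regular sequence, and hence that $A$ is Artinian Gorenstein with one-dimensional socle. For the degree, the paper stays inside local/graded algebra: it identifies the graded canonical module $K_{S_X}=S_X(s)$ and then tracks the canonical module (equivalently the $a$-invariant) through the Artinian reduction using \cite[Corollary 3.6.14]{BH}, each linear form shifting the twist by one, so $K_A=A(s+n+1)$ and the top nonzero degree of $A$ is $s+n+1$. You instead read the answer off the minimal free resolution of $S_X$ over $S=\CC[x_0,\dotsc,x_N]$: the last module is $S(-c)$ with $c=s+N+1$, and a Hilbert-series computation gives $\deg H(A,t)=c-(N-n)=s+n+1$. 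Two small points in your route deserve explicit mention, although both are available from what you invoke: (i) passing from the sheaf isomorphism $\omega_X\cong\OO_X(s)$ to the graded identification $\Ext_S^{N-n}(S_X,S)(-N-1)\cong S_X(s)$ (hence $c=s+N+1$) uses that $X$ is aCM, so that these modules are saturated and determined by their sheafifications---this is exactly the role of Proposition~\ref{senzanome}\eqref{senzanome2} in the paper; (ii) the claim that $c$ is the unique maximal twist in the whole resolution needs minimality (the minimal generator degrees of the $F_i$ grow strictly, so the minimal twist of $F_i$ is at least $i$) combined with the self-duality $F_i^\vee(-c)\cong F_{N-n-i}$, which bounds the maximal twist of $F_j$ by $c-(N-n-j)<c$ for $j<N-n$; this rules out cancellation of the $t^c$ term in $\sum_i(-1)^iP_i(t)$. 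With these two verifications spelled out, your resolution-plus-Hilbert-series argument is a complete and slightly more self-contained alternative to the paper's appeal to the $a$-invariant machinery of \cite{BH}, at the cost of some bookkeeping that the canonical-module formalism handles in one line.
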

\begin{proof}
By Theorem \ref{secondo} and by Proposition~\ref{senzanome}, the 
homogeneous coordinate ring $S_X$ is aG, i.e. it is the homogeneous 
coordinate ring of an aG scheme.

Therefore, also $A=S_X/(\eta_0,\dotsc,\eta_n)$ is aG since 
$\eta_0,\dotsc,\eta_n$ is a regular sequence: see for  example 
\cite[Proposition 3.1.19(b)]{BH}.
The ring $A$ is obviously graded so by Proposition 
\ref{senzanome}\eqref{senzanome3} it has symmetric Hilbert 
function since it is aG. By symmetry, the socle of $A$ is of 
dimension $1$. Now, 
it remains to prove that the socle of $A$ is of degree  $s+n+1$. Let $K_{A}$ be the \emph{canonical model} of $A$, see 
\cite[Definition 3.6.8 page 139, also page 140]{BH} and let 
$a(A)$ be the \emph{$a$-invariant} of $A$, 
see \cite[Definition 3.6.13]{BH}. 
By Proposition \ref{senzanome}\eqref{senzanome2}, $K_X=S_X(s)$, then
$K_A=A(s+n+1)$ since \cite[Corollary 3.6.14]{BH}. In particular, 
$a(A)=s+n+1$ by \cite[Corollary 3.6.14]{BH}.
This means $A_{s+n+1}\neq 0$ and 
$A_i=0$ for  $i\ge s+n+2$ 
(see the remark which follows \cite[Theorem 3.6.19]{BH}). 
\end{proof}

\begin{rmk}
With simple but tedious calculations we could find the values of the Hilbert function of $A$ of the 
preceding theorem. 
\end{rmk}

By Theorem \ref{thm:4s} it easily follows that 
an \emph{$s$-subcanonically regular} variety $X\subset\p^N$ of 
dimension $n$, defines the map $\alpha_X\colon\Gr(m,N)\dashrightarrow 
H_{m,s+n+1}$ presented in the introduction, see \eqref{eq:ac}.

It is natural to extract pieces of information on the geometry of $X$ by 
the nature of this map and, vice versa, to understand some features of 
some Artinian Gorenstein graded $\CC$-algebras. 
We are informed that the realm of Artinian Gorenstein graded 
$\CC$-algebras is huge, but the case where $X$ is a curve is quite 
important, see \cite{cv}. 

In this paper we concentrate mostly on the geometrical aspects of the 
problem. The first step, in the light of \cite{DZ1}, is to 
understand which are the varieties whose Macaulay polynomials are 
Fermat hypersurfaces.


\subsection{Example: the case of hypersurfaces} 

We analyse now a way to obtain subcanonical varieties 
via hypersurfaces of the $M$-dimensional projective space  and Veronese embeddings of $\p^M$. 

So, let us consider a hypersurface $Y\subset\p^M$ of degree $sn+M+1>0$, $s\in \Z$, $n,M\in \NN$; 
therefore, $Y\in\abs{(sn+M+1)H}$, where $H$ is the hyperplane divisor 
on $\p^M$. Assume that $Y$ has only normal Gorenstein canonical singularities.
By adjunction we have:
\begin{equation}\label{eq:2n}
\omega_Y=\OO_{\p^{M}}(snH)\otimes_{\OO_{\p^{M}}}\OO_{Y}.
\end{equation}

Now, consider the $n$-th embedding of $\p^M$, 
$v_n\colon\p^M\to\p^N$ , 
where $N:=\binom{n+M}{n}-1$. $V_n:=v_n(\p^M)$ is a \emph{Veronese 
variety}, and it is well-known that it is 
an aCM variety of degree $n^M$. 

Since $V_n$ is aCM, it follows that
$X:=v_n(Y)\subset V_n$ is aCM also: 

\begin{prop}\label{precedente} 
Let $Y\subset\p^M$ be a hypersurface of degree $sn+M+1$ with normal 
Gorenstein canonical singularities. Let $X$ be its $n$-tuple Veronese embedding 
$X:=v_n(Y)\subset V_n\subset \p^N$, with $N=\binom{n+M}{n}-1$. 
Then $X$ is aG and $s$-subcanonically regular. 
\end{prop}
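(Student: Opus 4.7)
The plan is to verify the hypotheses of Theorem~\ref{secondo} for $X$, which together with $s$-sub\-canon\-i\-ci\-ty and the canonical singularity condition will give the full conclusion.

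First I would observe that since $v_n$ is a closed embedding, its restriction $v_n\vert_Y\colon Y\to X$ is an isomorphism of schemes. Hence $X$ inherits from $Y$ the property of having normal Gorenstein canonical singularities. Moreover $\OO_X(1)=\OO_{\p^N}(1)\vert_X$ pulls back through $v_n$ to $\OO_Y(n)$, so by the adjunction formula \eqref{eq:2n} we obtain $\omega_X\cong\OO_Y(sn)\cong\OO_X(s)$, \ie $X$ is $s$-subcanonical. It therefore only remains to show that $X$ is aCM: by Proposition~\ref{senzanome} this forces $X$ to be aG, hence $s$-subcanonically regular.

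Next, I would establish that $X$ is projectively normal (in particular $\ell$-normal for every $1\le\ell\le \dim X+s-1$, which is the range required by Theorem~\ref{secondo}). For each $\ell\ge 0$ one factors the restriction through $V_n$:
\begin{equation*}
H^0(\p^N,\OO_{\p^N}(\ell))\twoheadrightarrow H^0(V_n,\OO_{V_n}(\ell))\cong H^0(\p^M,\OO_{\p^M}(n\ell))\twoheadrightarrow H^0(Y,\OO_Y(n\ell))\cong H^0(X,\OO_X(\ell)).
\end{equation*}
The first arrow is surjective because $V_n$ is aCM, and the second is surjective because $Y\subset\p^M$ is a hypersurface: the cohomology of
\begin{equation*}
0\to\OO_{\p^M}(n\ell-sn-M-1)\to\OO_{\p^M}(n\ell)\to\OO_Y(n\ell)\to 0
\end{equation*}
together with Bott's theorem (for $M\ge 2$) kills $H^1$ of the kernel.

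Then I would verify the intermediate cohomology vanishing needed in Theorem~\ref{secondo} when $s\ge 0$: for $0\le k\le s$ and $1\le i\le M-2=\dim X-1$, the same short exact sequence with $\ell$ replaced by $k$, combined with Bott's theorem applied to both $\OO_{\p^M}(nk)$ (degree $\ge 0$) and $\OO_{\p^M}(nk-sn-M-1)$ (degree $\le -M-1$), gives $h^i(X,\OO_X(k))=h^i(Y,\OO_Y(nk))=0$. At this point all the hypotheses of Theorem~\ref{secondo} are in force and we conclude.

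The main obstacle is purely bookkeeping: one must line up the Veronese twists $n\ell$ and $nk$ with the Bott-vanishing ranges on $\p^M$ to confirm both the $\ell$-normality of $X$ and the intermediate vanishings in the exact ranges required by Theorem~\ref{secondo}. The low-dimensional case $\dim X=0$ (\ie $M=1$) falls outside the scope of Theorem~\ref{secondo} but is trivial, since zero-dimensional schemes are automatically aCM, cf.\ \cite[page 10]{Mig}.
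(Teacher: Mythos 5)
Your proposal is correct, but it reaches the conclusion by a different route than the paper. You prove $s$-subcanonicity first (via the adjunction formula \eqref{eq:2n} and the isomorphism $v_n\vert_Y$), then verify the hypotheses of Theorem~\ref{secondo}: projective normality by factoring the restriction $H^0(\p^N,\OO_{\p^N}(\ell))\to H^0(X,\OO_X(\ell))$ through $V_n\cong\p^M$ and using the hypersurface sequence on $\p^M$, plus the intermediate vanishings $h^i(X,\OO_X(k))=0$ from the vanishing of intermediate cohomology of line bundles on $\p^M$; you then invoke Theorem~\ref{secondo} to get aG, hence $s$-subcanonical regularity. The paper instead argues entirely at the level of ideal sheaves in $\p^N$: it builds the diagram relating $\II_X$, $\II_{V_n}$ and $\II_{X,V_n}(j)\cong\OO_{V_n}(j-X)$, uses that $V_n$ is aCM and that $h^i(\p^M,\OO_{\p^M}(jn-sn-M-1))=0$ for $1\le i\le M-1$, and so kills all Hartshorne--Rao modules of $X$ directly (Proposition~\ref{rmk:1}), with aG and subcanonicity then read off from adjunction; in particular it never needs Theorem~\ref{secondo} or any Green-type input. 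Your approach is legitimate since Theorem~\ref{secondo} precedes this proposition, but it is a heavier tool than necessary here: note that your own Bott computation actually gives $h^i(X,\OO_X(j))=0$ for \emph{all} $j\in\Z$ and $1\le i\le \dim X-1$, so combined with the projective normality you proved, Proposition~\ref{prop:pn} would already yield aCM (and then aG by Proposition~\ref{senzanome}), which is essentially the paper's direct argument. Your separate treatment of the degenerate case $M=1$, and your observation that $M\ge 2$ is what makes the $H^1$ of the kernel vanish in the normality step, are both sound.
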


\begin{proof} 
We need to show that 
$h^i(\p^{N}, \II_X(j))=0$ for all $j\in \Z$, $1\le i \le M-1$. 

By the above inclusions we can
construct the following commutative and exact diagram of sheaves:
\begin{equation*}
\begin{CD}
@.0@.@. @.\\
@.@AAA@. @.\\
@.\II_{X,V_n}(j)@.@. 0@. \\
@.@AAA@. @AAA\\
0 @>>> \II_X(j) @>>> \OO_{\p^N}(j) @> >> \OO_{X}(j) @>>> 0\\ 
@. @AAA @| @AAA\\
0 @>>> \II_{V_n}(j) @>>> \OO_{\p^N}(j) 
@>>> \OO_{V_n}(j) @>>> 0\\ 
@.@AAA@. @AAA\\
@.0@.@. \OO_{V_n}(j-X) \\
@.@.@. @AAA\\
@.@.@. 0
\end{CD}
\end{equation*}
By the above diagram 
$\II_{X,V_n}(j)\cong \OO_{V_n}(j-X)$, and then $h^i(\p^{N},\II_{V_n}(j))=0$
for all $j\in \Z$, $1\le i \le M$ because $V_n$ is aCM. 
Therefore it is sufficient to show that $h^i(\p^{N},\OO_{V_n}(j-X))=0$
for all $j\in \Z$,  $1\le i \le M-1$. 

Since 
$v_n\colon \p^M
\to V_n\subset \p^N$ 
is an embedding, we are
reduced to show only that $h^i(\p^{M},\OO_{\p^M}(jn-sn-M-1))=0$
for all $j\in \Z$, $1\le i \le M-1$, 
which is well-known (see 
\cite[III.5.1]{H}).

Moreover, by 
\eqref{eq:2n}, $X$ is also aG and $s$-subcanonical. 
\end{proof}

\begin{cor}\label{conovero}
Let $C\subset\p^K$ be a cone over the $n$-tuple Veronese embedding 
$V_n\subset \p^N$ of $\p^M$, with $N=\binom{n+M}{n}-1$. Let $Z\subset C$ be a codimension one subvariety with normal 
Gorenstein canonical singularities which is $s$-subcanonical. 
Then $Z$ is $s$-aG and 
it is linearly equivalent to a cone over the image of 
 a hypersurface  $Y\subset\p^M$ of degree $sn+M+1$ under 
 the $n$-tuple Veronese embedding. 
\end{cor}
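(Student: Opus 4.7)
The plan is to reduce Corollary~\ref{conovero} to Proposition~\ref{precedente}: first locate the class of $Z$ in $\Cl(C)$ as that of a cone over a hypersurface in $\p^{M}$, and then transfer the aG property from this standard representative to $Z$ itself by means of Theorem~\ref{secondo}.

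First I would describe $\Cl(C)$. Let $L\subset C$ be the vertex of the cone, of projective dimension $K-N-1$, and consider the rational linear projection $\rho\colon C\dashrightarrow V_{n}\cong\p^{M}$ away from $L$. Blowing up $L$ in $\p^{K}$ and restricting to $C$ yields a resolution $\sigma\colon\tilde C\to C$ under which $\tilde C$ is realised as the projective bundle $\p(\OO_{\p^{M}}(n)\oplus\OO_{\p^{M}}^{\oplus K-N})$ over $V_{n}$, so that $\pic(\tilde C)$ is freely generated by the tautological class $\xi=\sigma^{*}H_{C}$ and by $h=\pi^{*}H_{\p^{M}}$, where $\pi\colon\tilde C\to V_{n}$ denotes the bundle map. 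The distinguished subvariety $\p(\OO_{\p^{M}}^{\oplus K-N})\subset\tilde C$ has class $\xi-nh$ and is contracted by $\sigma$ onto $L$, hence pushes to zero in $\Cl(C)$. This identifies $\Cl(C)\cong\Z\cdot\Sigma$, where $\Sigma:=\sigma_{*}h$ is the class of the cone $\mathrm{Cone}(v_{n}(H_{\p^{M}}))$, with the relation $H_{C}=n\Sigma$. In particular, every effective divisor on $C$ is linearly equivalent to $\mathrm{Cone}(v_{n}(Y))$ for a suitable hypersurface $Y\subset\p^{M}$.

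Second, I would use adjunction to pin down the degree of $Y$ corresponding to $Z$. Writing $Z\sim d\,\Sigma$, the pushforward $\sigma_{*}\omega_{\tilde C}=\omega_{C}$ is an explicit combination of $H_{C}$ and $\Sigma$, and the hypothesis $\omega_{Z}\cong\OO_{Z}(s)$ combined with $\omega_{Z}=(\omega_{C}\otimes\OO_{C}(Z))|_{Z}$ fixes $d$, yielding $Z\sim\mathrm{Cone}(v_{n}(Y))$ with $Y$ of the asserted degree; in the case $K=N$ this recovers Proposition~\ref{precedente} exactly.

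Third, for the aG conclusion I would apply Theorem~\ref{secondo}. Since $V_{n}$ is aCM (as recalled in Proposition~\ref{precedente}), so is the cone $C$, and therefore $h^{r}(\p^{K},\II_{C}(j))=0$ for every $j\in\Z$ and $1\le r\le\dim C$. The exact sequence $0\to\II_{C}(j)\to\II_{Z}(j)\to\II_{Z,C}(j)\to 0$ then reduces the required Hartshorne--Rao vanishings for $Z$ on $\p^{K}$ to cohomological statements about $\II_{Z,C}$ on $C$; after pulling back to $\tilde C$ these amount to standard vanishings on the projective bundle $\tilde C\to\p^{M}$, mirroring the diagram-chase in the proof of Proposition~\ref{precedente}. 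Hence the hypotheses of Theorem~\ref{secondo} are satisfied and $Z$ is $s$-aG. The main obstacle is the singularity of $C$ along $L$: $Z$ need not be Cartier on $C$ and $\Cl(C)$ may strictly contain $\pic(C)$, so one must argue consistently in terms of Weil divisors and pass to $\tilde C$ for cohomology, carefully tracking which divisors are contracted by $\sigma$. A secondary subtlety is justifying adjunction at the non-Cartier points of $Z$; but since $L$ has codimension $M+1\ge 2$ in $C$, the intersection $Z\cap L$ has codimension $\ge M\ge 1$ in $Z$, which is harmless.
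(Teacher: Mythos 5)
Your first two steps (computing $\Cl(C)\cong\Z$ via the resolution $\tilde C=\p(\OO_{\p^M}(n)\oplus\OO_{\p^M}^{\oplus K-N})$, and pinning the degree of $Y$ by adjunction) are fine and essentially equivalent to what the paper does more briefly by quoting \cite[Exercise II.6.3]{H}. The gap is in your third step, the aCM part. First, Theorem~\ref{secondo} is the wrong tool: its hypotheses ($\ell$-normality and the $h^i(X,\OO_X(k))$ vanishings) are essentially what has to be proved here; the relevant criterion is Proposition~\ref{rmk:1} together with Proposition~\ref{senzanome}. More seriously, your claim that the required vanishings for $\II_{Z,C}(j)\cong\OO_C(jH-Z)$ become ``standard vanishings on the projective bundle'' after pulling back to $\tilde C$ does not hold as stated. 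The divisor $Z$ is only a Weil divisor on $C$, and $\sigma$ contracts the divisor $E\sim\xi-nh$ onto the vertex, so $H^i(C,\OO_C(jH-Z))$ is \emph{not} computed by the line bundle $\OO_{\tilde C}(j\xi-dh)$: one must choose the correct representative in the coset $\{j\xi-dh+mE\}$ (the analogue of the integral total transform of Lemma~\ref{ritabis}) and control the $R^i\sigma_*$, exactly the Ferraro-type bookkeeping the paper carries out for scrolls. Concretely, the naive bundle computation gives $H^M(\tilde C,\OO_{\tilde C}(j\xi-dh))\supset H^M(\p^M,\OO_{\p^M}(jn-d))\neq 0$ for small $j$ and $d=sn+M+1\ge M+1$, whereas when the vertex is positive-dimensional one needs $h^M(C,\OO_C(jH-Z))=0$ in that range; so the vanishing you want is genuinely not visible by this route without further work.

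The paper sidesteps all of this with the very point you establish in step one but then do not use for cohomology: since $Z\sim D$, where $D$ is the cone over $v_n(Y)$, the divisorial sheaves satisfy $\OO_C(jH-Z)\cong\OO_C(jH-D)\cong\II_{D,C}(j)$, and the needed vanishings follow because $D$ is itself aCM --- its homogeneous coordinate ring is a polynomial extension of that of $v_n(Y)$, which is aCM by Proposition~\ref{precedente}. Replacing your pullback-to-$\tilde C$ computation by this linear-equivalence transfer (and invoking Proposition~\ref{rmk:1} rather than Theorem~\ref{secondo}) closes the gap. A minor further point: your remark that $\codim(Z\cap L,Z)\ge M\ge 1$ makes adjunction ``harmless'' is too quick, since codimension $1$ would not be harmless; you need codimension at least $2$ (e.g.\ $M\ge 2$, or $Z\cap L$ a proper subset of $L$) to argue on the smooth locus and extend by reflexivity.
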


\begin{proof}
By \cite[Exercise II.6.3]{H}, $\pic(C)\cong\pic(V_n)$, and the isomorphism
is given by the projection from the vertex of the cone. 
Now,  $\pic(V_n)\cong\pic(\p^M)=\ZZ$, so $Z$ is linearly equivalent to a
cone over the image of 
 a hypersurface  $Y\subset\p^M$ of degree $sn+M+1$ under the $n$-tuple Veronese
 embedding. By Proposition \ref{precedente}, 
this cone $D$ is aCM (recall Definition~\ref{definizioneuno} and 
the fact that a cone and its base have isomorphic 
homogeneous coordinate rings) and hence $s$-aG. Moreover, since $X$ is linearly equivalent to $D$, it follows that 
$h^i(\p^{N},\OO_C(j-X))= h^i(\p^{N},\OO_C(j-D))$
for all $j\in \Z$, hence the assertion.  
\end{proof}

Moreover, we deduce immediately:

\begin{prop} Let $Y\subset\p^M$ be a hypersurface of 
   degree  $sn+M+1$ with normal Gorenstein canonical singularities. Let 
$A:=\frac{S_X}{\gen{\eta_0,\dotsc,\eta_{M-1} }}$ be the Artinian 
graded Gorenstein ring corresponding to 
the $s$-subcanonical variety $X:=v_n(Y)\subset\p^N$, 
with $\eta_0,\dotsc,\eta_{M-1}$
general linear forms on $\p^N$. 
Then the Macaulay polynomial of $A$ is an $(s+M)$-tic of Waring number at most $n^M$ 
(\ie it is the sum of at most 
$n^M$ $(s+M)$-th powers of linear forms). 
\end{prop}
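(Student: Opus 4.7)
The plan is to apply the Apolarity Lemma (Lemma~\ref{lem:apo}) to an explicit $0$-dimensional subscheme of length $n^M$ cut out on the Veronese variety $V_n \supset X$ by the $M$ hyperplanes $V(\eta_j)$. First I would apply Proposition~\ref{precedente} to see that $X = v_n(Y)$ is $s$-subcanonically regular, and then Theorem~\ref{thm:4s} to deduce that $A$ is Artinian Gorenstein of socle degree $s + \dim X + 1 = s + M$. Hence the Macaulay polynomial $F$ has degree $s + M$, as claimed. Since $X$ is nondegenerate in $\p^N$, one has $\dim A_1 = N - M + 1$, which matches the number of variables in which $F$ is presented.

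Next, the inclusion $X \subset V_n$ factors the natural quotient map $\bar S := S/\gen{\eta_0,\dotsc,\eta_{M-1}} \twoheadrightarrow A$ through the intermediate ring $C := S_{V_n}/\gen{\eta_0,\dotsc,\eta_{M-1}}$. For general $\eta_j$ the scheme
\begin{equation*}
\Gamma := V_n \cap V(\eta_0,\dotsc,\eta_{M-1})
\end{equation*}
is, by Bertini and Bezout, a reduced $0$-dimensional subscheme of length $\deg V_n = n^M$ contained in the linear subspace $\Lambda := V(\eta_0,\dotsc,\eta_{M-1}) \cong \p^{N-M}$, and $C$ is its homogeneous coordinate ring. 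The Cohen-Macaulayness of $S_{V_n}$ (since $V_n$ is aCM) guarantees that $\eta_0,\dotsc,\eta_{M-1}$ is a regular sequence on $S_{V_n}$, so the length count is exact.

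Finally, under the canonical apolarity identification of $\bar S$ with its dual polynomial ring $\bar T$ via $x_i \leftrightarrow \partial_i$, the surjection $C \twoheadrightarrow A = \bar T/F^\perp$ becomes the inclusion $I(\Gamma) \subset F^\perp$. Lemma~\ref{lem:apo} then produces linear forms $\ell_1, \dotsc, \ell_{n^M} \in \bar S_1$---one for each point of $\Gamma$---and scalars $\lambda_i \in \CC^*$ with $F = \sum_i \lambda_i \ell_i^{s+M}$, proving the Waring bound. The main technicality I anticipate is the bookkeeping of the self-duality identification between the primal space where $\Gamma$ naturally sits and the dual space where the Waring linear forms formally reside: this is the standard apolarity manoeuvre, and once set up the conclusion is immediate from the inclusion of ideals.
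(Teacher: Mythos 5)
Your argument is correct and is essentially the paper's own: the paper's proof is a one-line remark that the Waring bound comes from $\deg V_n=n^M$, and the details you supply (socle degree $s+M$ via Theorem~\ref{thm:4s}, $\Gamma:=V_n\cap V(\eta_0,\dotsc,\eta_{M-1})$ of length $\deg V_n$ with $I(\Gamma)=(I(V_n),\eta_0,\dotsc,\eta_{M-1})\subset F^\perp$ by aCM-ness, then the Apolarity Lemma) are exactly the manoeuvre the paper carries out explicitly in the proof of Theorem~\ref{miofratellos}. No gaps worth flagging.
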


\begin{proof}
The estimate for the Waring number is given by the degree of the Veronese variety $V_n=v_n(\p^M)\supset X$. 
\end{proof}

\subsection{Complete intersections}
Perhaps the easiest way to obtain aG varieties is doing complete intersections in 
projective space. In fact, if a variety $X\subset\p^N$ is the 
complete intersection of $c$ hypersurfaces, $F_1\dotsc,F_c$ of 
degrees  $d_1,\dotsc, d_c$, then, by adjunction, we 
have that $\omega_X\cong \OO_{\p^N}(-N-1+\sum_{i=1}^c d_i)\mid_X$.
We set $E:=\oplus_{i=1}^c\OO_{\p^{N}}(-d_i)$. We have the \emph{Koszul 
complex}:
\begin{equation*}
0\to\wedge^c E\to \dotsb \to E \to \II_X\to 0, 
\end{equation*}
and noticing that the intermediate cohomology of $\wedge^i E$ is zero, 
we deduce that $X$ is aCM also. Therefore, $X$ is $s$-aG, with 
$s:=-N-1+\sum_{i=1}^c d_i$. 

\begin{lem}\label{quadrica}
Let $X$ be a codimension one subcanonical subvariety of a quadric 
hypersurface $Q\subset\p^N$. 
Then $X$ is aG.
\end{lem}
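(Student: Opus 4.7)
The plan is to realize $X$ as a complete intersection $Q \cap F$ for some hypersurface $F \subset \p^N$, so that the conclusion follows from the preceding discussion of complete intersections.

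Since $Q$ is a degree-$2$ hypersurface, it is itself aG with $\omega_Q \cong \OO_Q(1-N)$. Subcanonicality of $X$ gives $\omega_X \cong \OO_X(s)$ invertible, so $X$ is Gorenstein. I would then verify that $X$ is a Cartier divisor on $Q$ and apply adjunction, $\omega_X \cong \bigl(\omega_Q \otimes \OO_Q(X)\bigr)|_X$, to deduce $\OO_Q(X)|_X \cong \OO_X(s+N-1)$.

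The next step is to promote this to an isomorphism $\OO_Q(X) \cong \OO_Q(s+N-1)$ of line bundles on $Q$. For a smooth quadric of dimension at least three this is immediate from $\pic(Q) = \ZZ \cdot \OO_Q(1)$ by the Lefschetz hyperplane theorem; the section of $\OO_Q(s+N-1)$ cutting out $X$ on $Q$ then lifts, by the aCM property of $Q$, to a form $F$ on $\p^N$, and $X = Q \cap F$ is the desired complete intersection. From the discussion on complete intersections, $X$ is then $s$-aG, which is the claim.

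The main obstacles are the exceptional geometries of $Q$. For the smooth quadric surface $Q \cong \p^1 \times \p^1$, where $\pic(Q)$ has rank two, one needs to check that the subcanonical hypothesis $\omega_X \cong \OO_X(s)$ forces the bidegree of $X$ to be symmetric, so that $X$ is still cut out by a single hypersurface on $Q$. For singular quadric cones one must analyse the Weil-divisor class of $X$ via the description of $\mathrm{Cl}(Q)$ in terms of the base quadric, in the spirit of Corollary \ref{conovero}, and again use subcanonicality to reduce to a complete intersection. Once these cases are dealt with, the Lemma is established uniformly.
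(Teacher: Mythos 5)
Your plan agrees with the paper's proof exactly where it can work: whenever $\Cl(Q)$ is generated by the hyperplane class (smooth quadrics of dimension $\ge 3$; the paper invokes Klein's theorem, which also covers singular quadrics of rank $\ge 5$), and, after the symmetry check you flag, in the case of a smooth quadric surface or a cone over one, where $\Cl(Q)\cong\ZZ F_1\oplus\ZZ F_2$ and subcanonicality forces $X\equiv(s+2)(F_1+F_2)=(s+2)H$, so $X$ is indeed cut on $Q$ by a hypersurface (the paper's case $r=3$, where the equality $a=b=s+2$ is derived from subcanonicality --- this is a genuine check, not a formality, but it is not the main problem).

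The genuine gap is your claim that for singular quadric cones one can ``again use subcanonicality to reduce to a complete intersection''. For a cone $Q$ over a conic (the paper's case $r=2$) this is false: there $\Cl(Q)\cong\ZZ$ is generated by a ruling $R$ with $H\equiv 2R$, and there exist subcanonical codimension-one subvarieties in \emph{odd} classes. The simplest is a ruling itself: the span $\p^{N-2}$ of the vertex with a point of the conic is subcanonical (with $s=1-N$) and has degree $1$, whereas any $Q\cap F$ has even degree $2\deg F$; more generally any subcanonical $X\equiv dR$ with $d$ odd is not Cartier on $Q$ and is not a complete intersection, yet the Lemma asserts it is aG. So your endgame cannot be uniform, and this case needs a different idea --- which is exactly what the paper supplies: it writes $X\equiv dR$, replaces $X$ by the cone $D$ over the zero-dimensional scheme $X\cap\p^2$ on the base conic, argues that $D$ is aG (a cone and its base have the same homogeneous coordinate ring), and then transfers the aCM property back to $X$ via the isomorphism of divisorial sheaves $\OO_Q(j-X)\cong\OO_Q(j-D)$, giving $h^i(\p^N,\II_X(j))=h^i(Q,\OO_Q(j-X))=h^i(Q,\OO_Q(j-D))=0$ for the relevant $i,j$, hence $X$ is aCM and, being subcanonical, aG. Some argument of this type --- working with Weil divisor classes and comparing cohomology along a linear equivalence, rather than producing a hypersurface that cuts out $X$ --- is what is missing from your proposal.
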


\begin{proof}
Let $r$ be the rank of the quadric $Q$; if $r\ge 4$, then, by Klein's theorem, see 
\cite[Exercise II.6.5(d)]{H}, $X$ is a complete intersection, and therefore it is aG. 

If $r=2$, then $\pic(Q)\cong\pic(C)=\ZZ P$, where $C$ is a smooth conic and $P$ is the class of 
a (closed) point of $C$, and the first isomorphism is given by the 
projection of the cone onto its base;   
moreover, the class of the hyperplane section $H$ of $Q$ is double the generator, \ie $H\equiv 2P$ (where 
$\equiv$ is the linear equivalence of divisors) see \cite[Exercises II.6.3 and II.6.5(c)]{H}. In particular, we can think of $Q$ as 
a cone with base $C\subset\p^2$. 
$X$ is linearly equivalent to the subscheme $D$ given by 
$d$ rulings of the cone, where $d:=\deg(X)$, \ie the span of the vertex of $Q$ with the $d$ points $X\cap\p^2$. By adjunction, 
$X\cap\p^2\subset C$ is subcanonical, and hence aG, since it is zero-dimensional. Therefore, $D$ is aG. 
Now, since $X\equiv D$, then $h^i(\p^{N},\OO_Q(j-X))= h^i(\p^{N},\OO_Q(j-D))$
for all $j\in \Z$, hence the assertion.

If $r=3$, then 
$\pic(Q)\cong(Q')\cong \ZZ F_1\oplus \ZZ F_2$,  where $Q'\subset\p^3$ is a smooth quadric and $F_1,F_2$ are the classes of 
the two rulings of  $Q'$, and the first
isomorphism is given by the 
projection of the cone onto its base;   
moreover, the class of the hyperplane section
$H$ of $Q$  is $H\equiv F_1+F_2$ (see again
\cite[Exercises II.6.3 and II.6.5(c)]{H}). 
In particular, we can think of $Q$ as 
a cone with base $Q'\subset\p^3$. $X$ is linearly equivalent
to the cone over a curve $C\subset Q'$; moreover, we can identify $C$ with 
$X\cap Q'\subset\p^3$, and hence $C$ is subcanonical. Suppose
that $C$ is $s$-subcanonical and $[C]=aF_1+bF_2$. Then 
$a=b=s+2$, because on $Q$ every effective divisor moves, so
$C$ is linearly equivalent to a complete
intersection, which 
is aCM, and we can conclude as above that $X$ is aCM and therefore aG. 

\end{proof}


\subsection{$1$-codimensional varieties in rational normal scrolls} 
\label{Ferrario}

For fixing some notations, we recall some basic facts about rational normal scrolls.

\subsubsection{Rational normal scrolls}
By definition, a \emph{rational normal scroll} (RNS for short) 
of type $(a_1,\dotsc,a_k)$, is the image $S_{a_1,\dotsc,a_k}$ of 
the $\p^{k-1}$-bundle 
$\pi\colon\p(\cE)=\p(\OO_{\p^1}(a_1)\oplus\dotsb\oplus
\OO_{\p^1}(a_k))\rightarrow\p^{1}$, under the morphism 
$j\colon\p(\cE)\rightarrow \p^{N}$  defined by the tautological bundle 
$\OO_{\p(\cE)}(1)$. We can arrange the integers $0\leq a_{1}\leq 
a_{2}\leq\ldots\leq a_{k}$ and notice that $N=k-1+f$ where we set $f:=\sum_{i=1}^{k} 
a_i=\deg(S_{a_1,\dotsc,a_k})$. If $a_{1}=a_{2}=\ldots=a_{\ell}=0$ and 
$a_{\ell+1}\neq 0$ where 
$1\leq \ell< k$ then $S_{a_1,\dotsc,a_k}$ is a cone of vertex $V$ of 
dimension $\ell-1$. Notice, in particular, that if 
$a_{1}=a_{2}=\ldots=a_{k-1}=0$ and $a_{k}=1$ then  
$S_{a_1,\dotsc,a_k}=\p^{N}$ where $N=k$. 
Since our theory is for varieties $X$ contained in a 
projective space, in the case of varieties 
$X\subset S_{a_1,\dotsc,a_k}$ we need to study the morphism $j\colon 
Y\rightarrow X$ induced by $j\colon\p(\cE)\rightarrow 
S_{a_1,\dotsc,a_k}\subset\p^{N}$ on the $j$-strict transform $Y$ of 
$X$. We follow the well-written exposition on Weil divisors on 
rational normal scrolls \cite{Fe}.

It is well known and easy to see that $j\colon\p(\cE)\rightarrow 
S_{a_1,\dotsc,a_k}\subset\p^{N}$ is a rational resolution of the 
singularity $V$. In particular the Weil divisors are Cartier divisors 
on $\p(\cE)$ but this is not true in general for 
$S_{a_1,\dotsc,a_k}$. Write $\Cl(X)$ and $\cacl(X)$ for the divisor 
class group and the group of Cartier divisors modulo principal
divisors of $X$, as in \cite{H}. Since $\cod(V,S_{a_1,\dotsc,a_k})\geq 
2$, restriction to the complement of $V$ gives an isomorphism 
$\Cl(S_{a_1,\dotsc,a_k})\rightarrow
\Cl(S_{a_1,\dotsc,a_k}\setminus V)$ by\cite[Proposition II.6.5]{H}. 
Since $j$ is an isomorphism over $j^{-1}(S_{a_1,\dotsc,a_k}\setminus 
V)$, we have a surjective homomorphism $J\colon 
\cacl(\p(\cE))\rightarrow \Cl(S_{a_1,\dotsc,a_k})$ which is an 
isomorphism if $\cod(V,S_{a_1,\dotsc,a_k})> 2$ and has kernel 
isomorphic to $\mathbb Z$, spanned by the class of the exceptional 
divisor $E$ of $j$, if $\cod(V,S_{a_1,\dotsc,a_k})=2$.


In 
\cite{Fe} $J(D)$ is called \emph{the strict image} of a Cartier 
divisor $D\subset\p(\cE)$. In 
particular if $[H]$ and $[F]$ are respectively the class of the tautological 
divisor and the class of the fibre of $\pi$, then 
$\pic(\p(\cE))= [H]\mathbb Z\oplus [F]\mathbb Z$. We denote by $\sim$ 
the numerical equivalence of divisors. 
It easily follows that:
\begin{cor}\label{ferraro} 
    \begin{enumerate}
\item If $\cod(V,S_{a_1,\dotsc,a_k})> 2$, then 
$\Cl(S_{a_1,\dotsc,a_k})=J[H]\ZZ\oplus J[F]\ZZ$; 
\item if $\cod(V,S_{a_1,\dotsc,a_k})= 2$, then $E\sim H-fF$ and 
$\Cl(S_{a_1,\dotsc,a_k})=J[F]\mathbb Z$.
\end{enumerate} 
\end{cor}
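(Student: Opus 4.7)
The plan is to handle the two parts in order. For part (1), the isomorphism $J\colon\cacl(\p(\cE))\to\Cl(S_{a_1,\dotsc,a_k})$ established just before the statement is the whole content: combined with the standard computation $\pic(\p(\cE))=\cacl(\p(\cE))=[H]\ZZ\oplus[F]\ZZ$ for a projectivised vector bundle over $\p^1$, transporting this decomposition via $J$ immediately yields $\Cl(S_{a_1,\dotsc,a_k})=J[H]\ZZ\oplus J[F]\ZZ$.

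For part (2), the condition $\cod(V,S_{a_1,\dotsc,a_k})=2$ forces $a_1=\dotsb=a_{k-1}=0$ and $a_k=f$, so $\cE\cong\OO_{\p^1}^{\oplus(k-1)}\oplus\OO_{\p^1}(f)$, the vertex is $V\cong\p^{k-2}$, and the exceptional divisor of $j$ is the sub-$\p^{k-2}$-bundle $E=\p(\OO_{\p^1}^{\oplus(k-1)})\subset\p(\cE)$, with $j|_E\colon E\to V$ a $\p^1$-bundle projection onto $V$. The crux is to prove $E\sim H-fF$. I would do this by intersection theory on $\p(\cE)$: writing numerically $[E]=\alpha[H]+\beta[F]$ and using the standard relations $H^{k-1}\cdot F=1$, $H^k=f$, $F^2=0$, the coefficient $\alpha$ is read off from $E\cdot H^{k-2}\cdot F=1$, since $E$ meets a general fibre of $\pi$ in a linearly embedded $\p^{k-2}$, and $\beta$ is read off from $E\cdot H^{k-1}=0$, since $j$ contracts $E$ onto $V$ whose dimension is strictly less than $\dim E=k-1$. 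This yields $\alpha=1$ and $\alpha f+\beta=0$, hence $E\sim H-fF$.

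Once this is established, the rest is formal: since $[E]\in\ker J$, the relation $J[H]=fJ[F]$ shows that $J[F]$ alone generates the image of $J$; combined with surjectivity of $J$ this gives $\Cl(S_{a_1,\dotsc,a_k})=J[F]\ZZ$, with $J[F]$ of infinite order because $F$ has positive intersection number with the class of a curve in the smooth locus of $S_{a_1,\dotsc,a_k}$ transverse to the ruling. The main technical obstacle is the intersection-theoretic identification $E\sim H-fF$ on $\p(\cE)$; in particular, one must rigorously justify that $H|_E$ factors through $j|_E$ and hence $H^{k-1}|_E=0$ for dimensional reasons. Everything else in the corollary is a direct translation of the kernel/cokernel description of $J$ stated just before.
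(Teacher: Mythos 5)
Your proposal is correct and takes essentially the same route as the paper, which deduces both parts directly from the preceding description of $J\colon\cacl(\p(\cE))\to\Cl(S_{a_1,\dotsc,a_k})$ (an isomorphism when $\cod(V,S_{a_1,\dotsc,a_k})>2$, surjective with kernel $\ZZ[E]$ when the codimension is $2$) together with $\pic(\p(\cE))=[H]\ZZ\oplus[F]\ZZ$, leaving the identity $E\sim H-fF$ as standard material from Ferraro's paper. The only additional content you supply is the intersection-theoretic verification of $E\sim H-fF$ (via $E\cdot H^{k-2}\cdot F=1$ and $E\cdot H^{k-1}=0$), and that computation is sound.
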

 
 We remark that the Weil divisor $J(F)$ on $S_{a_1,\dotsc,a_k}$ is 
 not Cartier if $V\neq\emptyset$ since, as in the standard case of the 
 quadric cone, $J(F)$ is not locally principal in a neighbourhood of 
 $V$. Clearly the theory of Weil divisors on $S_{a_1,\dotsc,a_k}$ 
 splits in two cases according to the codimension of $V$. For 
 what we need, we only stress that given a closed irreducible reduced 
 subscheme $X\subset S_{a_1,\dotsc,a_k}$ of pure codimension $1$ with 
 no embedded components, the proper transform $Y\subset\p(\cE)$ is the 
 closure $\overline{j^{-1}(X\cap((S_{a_1,\dotsc,a_k}\setminus V))}$. 
 Hence, by linearity, the proper transform $Y$ is defined for every Weil 
 divisor $X$ on $S_{a_1,\dotsc,a_k}$, and $J(Y)=X$. 
 Moreover if $\cod(V,S_{a_1,\dotsc,a_k})> 2$, then there exists 
 a unique Cartier divisor $Y$ such that $J(Y)=X$, but 
 if $\cod(V,S_{a_1,\dotsc,a_k})= 2$ then $J(Y+mE)=X$ 
 for every $m\in\mathbb Z$.
 
 Finally to treat the group $\Div(S_{a_1,\dotsc,a_k})$ of
 divisorial sheaves on a singular scroll in terms of 
 $\pic(\p(\cE))$ we recall that 
 $\Div(S_{a_1,\dotsc,a_k})=\frac{\Cl(S_{a_1,\dotsc,a_k})}{\equiv}$
 where $\equiv$ is the \emph{linear equivalence} and, if 
 $\cod(V,S_{a_1,\dotsc ,a_k})> 2$, then $j_{\star}\colon 
 \pic(\p(\cE))\rightarrow \Div(S_{a_1,\dotsc , a_k})$ is an 
 isomorphism. To study also the case 
 $\cod(V,S_{a_1,\dotsc ,a_k})=2$ we recall that letting 
 $\cF^{\vee\vee}$ be the double dual of a sheaf on a \emph{{normal 
 scheme}}, the homomorphism $J$ induces a homomorphism,denoted by 
 the same letter, 
 $J\colon\pic(\p(\cE))\rightarrow \Div(S_{a_1,\dotsc ,a_k})$ 
 given by $\OO_{\p(\cE)}(Y)\mapsto j_{*}(\OO_{\p(\cE)}(Y))^{\vee \vee}$. 
 The homomorphism $J$ is an isomorphism if  $\cod(V,S_{a_1,\ldots,a_k})> 2$. 
 Hence with a minor effort we see that the theory of Cartier divisors 
 and \emph{numerical equivalence} on $\pic(\p(\cE))$ goes 
 parallel to the theory of Weil divisors and \emph{{linear 
 equivalence}} on $\Div(S_{a_1,\dotsc,a_k})$. In particular
 \begin{lem}\label{rita} Let $j\colon\p(\cE)\rightarrow S_{a_1,\dotsc,a_k}
     \subset\p^{N}$ be the desingularisation of a singular scroll of 
     vertex $V$ such that $\cod(V,S_{a_1,\dotsc,a_k})> 2$. Then 
     it holds:
 \begin{enumerate}
\item let $X\equiv aJ(H)+bJ(F)$ be a Weil divisor on 
$S_{a_1,\dotsc,a_k}$; then $Y\sim aH+bF$ for the unique Cartier 
divisor on $\p(\cE)$ such that $J(Y)=X$;
\item $j_{*}\OO_{\p(\cE)}(Y)=\OO_{S_{a_1,\dotsc,a_k}}(X)$;
\item $\OO_{S_{a_1,\dotsc,a_k}}(X)=(\OO_{S_{a_1,\dotsc,a_k}}(X))^{\vee 
\vee}$.
\end{enumerate}     
\end{lem}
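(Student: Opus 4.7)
The plan is to reduce all three assertions to the isomorphism $J$ recalled above the lemma, which under the hypothesis $\cod(V,S_{a_1,\dotsc,a_k})>2$ holds both on divisor class groups and on the sheaf level. An auxiliary point is that the same codimension bound forces the exceptional locus $E\subset\p(\cE)$ of $j$ to have codimension at least $2$: writing $a_1=\dotsb=a_\ell=0<a_{\ell+1}\le\dotsb\le a_k$, the vertex $V$ is an $(\ell-1)$-plane and $E=\p(\OO^{\ell})$ is an $\ell$-dimensional subbundle of $\p(\cE)$, so $\cod E = k-\ell = \cod V - 1 \ge 2$.

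For (1), the injectivity of $J$ applied to $J(aH+bF) = aJ(H)+bJ(F) \equiv X = J(Y)$ yields $Y\equiv aH+bF$ on $\p(\cE)$. Since $\pic(\p(\cE)) = [H]\ZZ\oplus[F]\ZZ$ is free and a linear-equivalence class on $\p(\cE)$ is determined by the pair $(a,b)$, the corresponding numerical equivalence $Y\sim aH+bF$ follows at once.

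For (2), by the definition of $J$ on line bundles one has $\OO_{S_{a_1,\dotsc,a_k}}(X) = j_*(\OO_{\p(\cE)}(Y))^{\vee\vee}$, so the content is to show that $j_*\OO_{\p(\cE)}(Y)$ is already reflexive. The codimension bound on $E$ allows a Hartogs-type extension: on any open $U\subset S_{a_1,\dotsc,a_k}$ the restriction $H^0(j^{-1}(U),\OO_{\p(\cE)}(Y)) \to H^0(j^{-1}(U)\setminus E, \OO_{\p(\cE)}(Y))$ is an isomorphism, and via $j$ this identifies $j_*\OO_{\p(\cE)}(Y)$ with the pushforward $i_*(\OO_{S_{a_1,\dotsc,a_k}\setminus V}(X|_{S_{a_1,\dotsc,a_k}\setminus V}))$ under the open immersion $i\colon S_{a_1,\dotsc,a_k}\setminus V \hookrightarrow S_{a_1,\dotsc,a_k}$. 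Since $V$ has codimension $\ge 2$ in the normal scheme $S_{a_1,\dotsc,a_k}$, this pushforward of a line bundle from the smooth locus is reflexive, so the double dual is trivial and (2) follows. Statement (3) is then immediate from (2), since a reflexive sheaf coincides with its own double dual. The main technical point is verifying the reflexivity in (2) — that is, translating the hypothesis $\cod V > 2$ into Hartogs extension across $E$; the rest is a formal manipulation of the isomorphism $J$.
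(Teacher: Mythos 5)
Your argument is correct, but it is not the paper's: the paper's entire proof of Lemma \ref{rita} is a citation of \cite[Note 3.14 and Corollary 3.20]{Fe}, so all three statements are imported from Ferraro's formalism, whereas you reprove them directly. The mechanism you isolate --- that $\cod(V,S_{a_1,\dotsc,a_k})>2$ forces the exceptional locus $E\cong\p^{\ell-1}\times\p^1$ of $j$ to have codimension at least $2$ in $\p(\cE)$ --- is exactly the right one: injectivity of $J$ on classes gives (1) (linear, hence numerical, equivalence), and Hartogs extension of sections of the line bundle $\OO_{\p(\cE)}(Y)$ across $E$ identifies $j_*\OO_{\p(\cE)}(Y)$ with $i_*\bigl(\OO_{S\setminus V}(X|_{S\setminus V})\bigr)$, which is the divisorial sheaf $\OO_{S_{a_1,\dotsc,a_k}}(X)$ and is reflexive because $V$ has codimension at least $2$ in the normal variety $S_{a_1,\dotsc,a_k}$; this yields (2) and (3) simultaneously. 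What the citation buys the paper is brevity and consistency with Ferraro's language of strict and total images, which is needed anyway for the harder codimension-two case treated in Lemma \ref{ritabis}; what your argument buys is a self-contained proof that also explains why the statement must be modified when $\cod(V,S_{a_1,\dotsc,a_k})=2$: there $E$ is a divisor, sections no longer extend for free, $J$ has a kernel generated by $[E]$, and one is forced into the integral-total-transform bookkeeping of Lemma \ref{ritabis}. One small streamlining: your Hartogs identification proves $j_*\OO_{\p(\cE)}(Y)=\OO_{S_{a_1,\dotsc,a_k}}(X)$ directly, so the preliminary appeal to the definition of $J$ on $\pic(\p(\cE))$ (reducing (2) to reflexivity of the pushforward) is not actually needed.
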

\begin{proof} See \cite[Note 3.14 and Corollary 3.20]{Fe}.
\end{proof}

For the special case where $\cod(V,S_{a_1,\dotsc,a_k})=2$ there 
is the problem that given a  Weil divisor $X$ it is not necessarily 
true that $j_{*}\OO_{\p(\cE)}(Y+mE)=\OO_{S_{a_1,\dotsc,a_k}}(X)$ for 
any $m\in\mathbb Z$. To choose a good element in the set 
$J^{-1}[X]=\{[Y+mE]|m\in\mathbb Z\}$, in \cite{Fe}, the 
rational total transform of $X$ is defined as the rational divisor $Y+qE$ where 
$q$ is uniquely obtained by the relation $(Y+qE)\cdot E\cdot 
H^{k-2}=0$. 
Then assuming first that $X$ is effective and that $Y\sim 
aH+bF$, it is easy to see that $q=\frac{b}{f}$ (recall $f=\sum 
a_{i}$) so we can define a unique element $X^{*}=Y+n_{q}E\in 
J^{-1}(X)$ called the \emph{{integral total 
transform}} of $X$ where $n_{q}$ is 
the smallest integer $\geq q$.

\begin{lem}\label{ritabis} Let $j\colon\p(\cE)\rightarrow S_{a_1,\dotsc,a_k}
     \subset\p^{N}$ be the desingularisation of a singular scroll of 
     vertex $V$ such that $\cod(V,S_{a_1,\dotsc,a_k})=2$. Then 
     the integral total transform $X^{*}$ of a Weil divisor 
     $X$ on $S_{a_1,\dotsc,a_k}$ is uniquely determined by the 
     linear equivalence class. Moreover:
 \begin{enumerate}
\item let $X\equiv dJ(F)$ where $d\geq 0$; then 
$X^{*}\sim (m+1)H+(f-h-1)F$ where 
$d-1=mf+h$ $(m\geq -1$ and $0\leq h<f$, $m+1=a+n_{q}$ and 
$f-h-1=fn_{q}-b)$;
\item let $X\equiv dJ(F)$ where $d< 0$; then 
$\OO_{S_{a_1,\dotsc,a_k}}(X)\simeq 
\OO_{S_{a_1,\dotsc,a_k}}(-(m+1)J(H)+(f-h-1)J(F))\simeq
\OO_{S_{a_1,\dotsc,a_k}}(-mJ(H)-(h+1)J(F))$;
\item let $X\equiv dJ(F)$ where $d\in\mathbb Z$; then 
$j_{*}\OO(X^{*})=\OO_{S_{a_1,\dotsc,a_k}}(X)$.
\end{enumerate}     
\end{lem}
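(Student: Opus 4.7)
The plan is to reduce the entire statement to a computation in $\pic(\p(\cE)) = \ZZ[H] \oplus \ZZ[F]$, using the relation $E \sim H - fF$ from Corollary \ref{ferraro}(2) together with the standard intersection numbers $H^{k}=f$, $H^{k-1}\cdot F=1$ and $F^{2}=0$ on a scroll. The guiding principle is that the integral total transform $X^{*}$ has been designed so that $j_{*}\OO_{\p(\cE)}(X^{*})$ is already reflexive, and the three parts of the lemma essentially repackage this choice.

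For part (1) I would start from any Cartier lift $Y\sim aH+bF$ of the effective Weil divisor $X$ on $\p(\cE)$; since $J(E)=0$ forces $J(H)=fJ(F)$, the identification $J(Y)=X$ forces $d=af+b$. Expanding the defining equation $(Y+qE)\cdot E\cdot H^{k-2}=0$ in terms of $H$ and $F$ collapses to the single equation $q=b/f$, hence $n_{q}=\lceil b/f\rceil$. Writing $b=f(n_{q}-1)+(h+1)$ with $0\le h<f$ is the division algorithm applied with the correct offset, and together with $a+n_{q}=m+1$ it yields the identity $d-1=mf+h$. Substituting $E\sim H-fF$ into $X^{*}=Y+n_{q}E$ then gives the asserted formula $(m+1)H+(f-h-1)F$ modulo the sign conventions on $E$.

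For part (2) I would exploit that when $d<0$ the Weil divisor $-X$ is effective, so part (1) supplies an explicit Cartier representative for $(-X)^{*}$. The reflexive Weil-divisorial sheaf then satisfies $\OO_{S}(X)\cong\Ho(\OO_{S}(-X),\OO_{S})$, and dualising the expression from (1) on the normal variety $S$ produces the first displayed isomorphism. The second isomorphism is obtained simply by adding the trivial class $J(E)=J(H)-fJ(F)=0$, which is exactly the freedom that the codimension-two case of Corollary \ref{ferraro} allows in $J^{-1}[X]$.

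For part (3), the point is that on the smooth locus $\p(\cE)\setminus E$ one has $j_{*}\OO_{\p(\cE)}(X^{*})|_{S\setminus V}=\OO_{S}(X)|_{S\setminus V}$ directly from Lemma \ref{rita}; since $\codim(V,S)\ge 2$ and $\OO_{S}(X)$ is reflexive on the normal scheme $S$, any rank-one reflexive extension from $S\setminus V$ is unique, so it suffices to check that $j_{*}\OO_{\p(\cE)}(X^{*})$ is reflexive. This is where the integral rounding $n_{q}=\lceil q\rceil$ enters: the inequality $n_{q}\ge q$ is precisely what guarantees that $j_{*}\OO_{\p(\cE)}(Y+n_{q}E)$ picks up no extra sections that would destroy reflexivity. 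The main obstacle I anticipate is the discrete bookkeeping in parts (1) and (2): matching the ceiling $\lceil b/f\rceil$ to the division $d-1=mf+h$ with $0\le h<f$ must be done case-by-case according to whether $f\mid b$, and the signs of the $F$-coefficients in the effective and anti-effective cases of (2) have to be tracked carefully to avoid an off-by-one error when passing through the duality $\OO_{S}(X)\cong\OO_{S}(-X)^{\vee}$.
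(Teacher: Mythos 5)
The paper contains no proof of this lemma at all: like Lemma~\ref{rita}, it is imported from Ferraro's theory of Weil divisors on scrolls \cite{Fe}, so your direct verification is a self-contained alternative rather than a reconstruction of an argument in the text. Your bookkeeping for parts (1) and (2) is sound: $J(E)=0$ gives $J(H)=fJ(F)$ in $\Cl(S_{a_1,\dotsc,a_k})$, hence $d=af+b$; the relation $(Y+qE)\cdot E\cdot H^{k-2}=0$ does give $q=b/f$ (using $H^{k}=f$, $H^{k-1}\cdot F=1$, $F^{2}=0$); and the divisions match exactly as you say, $m+1=a+n_q$, $f-h-1=fn_q-b$. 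You can also drop the hedge about ``sign conventions on $E$'': with the paper's own convention $E\sim H-fF$ your computation yields $X^{*}\sim (m+1)H-(f-h-1)F$, and this minus sign is the correct one --- the printed $+(f-h-1)F$ contradicts the parenthetical identity $f-h-1=fn_q-b$, has image $\bigl(d+2(f-h-1)\bigr)J(F)\neq dJ(F)$ under $J$ unless $f\mid d$, and is incompatible with part (2), which is precisely the dual of the minus-sign version, as your own derivation of (2) (apply (1) to $-X$, dualise, and add the trivial class $J(E)=J(H)-fJ(F)=0$) shows.

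The genuine gap is in part (3). For every $n$ one has $j_{*}\OO_{\p(\cE)}(Y+nE)\subseteq\OO_{S_{a_1,\dotsc,a_k}}(X)$: a section over $U$ is a rational function $g$ with $\mathrm{div}(g\circ j)+\widetilde{X}+nE\ge 0$ on $j^{-1}(U)$, and pushing down over the normal base (the vertex has codimension $2$) gives $\mathrm{div}_S(g)+X\ge 0$. So the danger is the opposite of the one you describe: not ``extra sections destroying reflexivity'' but \emph{missing} sections, i.e.\ a strict inclusion; and once equality is known, reflexivity of the pushforward is automatic, so the reduction ``it suffices to check reflexivity'' does not by itself do any work. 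What must actually be proved --- and is the whole content of the rounding $n_q=\lceil q\rceil$ --- is that every local section $g$ of $\OO_{S_{a_1,\dotsc,a_k}}(X)$ near the vertex satisfies $\mathrm{ord}_E(g)\ge -n_q$. This needs an argument, for instance: intersect the (effective) strict transform of $\mathrm{div}_S(g)+X$, namely $\mathrm{div}(g\circ j)-\mathrm{ord}_E(g)E+\widetilde{X}$, with a general contracted curve $\Gamma\subset E$ in the class $E\cdot H^{k-2}$; since $\mathrm{div}(g\circ j)\cdot\Gamma=0$ ($\Gamma$ is complete and the divisor is principal near it), $\widetilde{X}\cdot\Gamma=b$ and $E\cdot\Gamma=-f$, one gets $f\,\mathrm{ord}_E(g)+b\ge 0$, i.e.\ $\mathrm{ord}_E(g)\ge -q$, and integrality then gives $\mathrm{ord}_E(g)\ge -\lfloor q\rfloor\ge -n_q$. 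Note that this uses only $\mathrm{div}_S(g)+X\ge 0$, not effectivity of $X$, so it also covers the case $d<0$ demanded by (3), which your sketch leaves untouched.
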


Recall that $S_{a_1,\dotsc,a_k}$ is aCM, 
and that in \cite{Sch} it is proved that the dualising sheaf 
$\omega_{S_{a_1,\dotsc,a_k}}$ satisfies:
\begin{equation*}
    \omega_{S_{a_1,\dotsc,a_k}}=j_{*}\OO_{\p(\cE)}(K_{\p(\cE)})Â¥
 \end{equation*}

where the canonical class of $\p(\cE)$ is given by:
 
\begin{equation*}
K_{\p(\cE)}= -kH + (N - k - 1) F. 
\end{equation*}

\subsubsection{Crepant $s$-subcanonical varieties in a RNS}
We consider now $1$-codimensional subvarieties 
contained in rational normal scrolls and we ask under which conditions 
they are $s$-subcanonical. To obtain the classification 
we assume that if $X\subset S_{a_1,\dotsc,a_k}$ 
is a Gorenstein irreducible $s$-subcanonical variety then the 
morphism $j\colon\p(\cE)\rightarrow S_{a_1,\dotsc,a_k}\subset\p^{N}$ induces a 
\emph{crepant} morphism $j_{\mid Y}\colon Y\rightarrow X$ on the 
strict transform $Y$ of $X$. For the notion of crepant morphism see 
\cite{Re}. The crepant condition is a natural one in 
the context of $s$-subcanonically regular varieties.

\begin{lem}\label{lem:pic}
Let $s\in\ZZ$ and let $X\subset S_{a_1,\dotsc,a_k}\subset\p^N$, $k>2$, be 
a Gorenstein irreducible $s$-subcanonical variety. Let $Y$ be the 
$j$-proper transform of $X$. 
Assume that the morphism $j_{\mid Y}\colon Y\rightarrow X$ is 
crepant, that is $j^{*}(\omega_{X})=\omega_{Y}$, and that $Y\sim aH+bF$. 
If $k>-s$ then $s+k=a$ and $b=k+1-N$ or $a=k+s+1$, $b=0$ and $N=k$ (
\ie $X$ is a hypersurface of $S_{a_1,\dotsc,a_k}=\p^N$ where 
$a_{1}=\ldots=a_{k-1}=0$ and $a_{k}=1$). 
\end{lem}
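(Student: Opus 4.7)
The plan is to compute $\omega_{Y}$ by adjunction on the smooth variety $\p(\cE)$, equate it with $j^{*}\omega_{X}$ using the crepant assumption, and then convert the resulting relation in $\pic(Y)$ into numerical constraints on $a$ and $b$ by intersecting with suitable $1$-cycles on $Y$.

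By adjunction on $\p(\cE)$, $\omega_{Y}\cong(K_{\p(\cE)}+Y)|_{Y}$. Substituting the given formula $K_{\p(\cE)}=-kH+(N-k-1)F$ together with $Y\sim aH+bF$, we obtain $\omega_{Y}\cong((a-k)H+(b+N-k-1)F)|_{Y}$. Since $X$ is $s$-subcanonical, $\omega_{X}\cong\OO_{X}(s)$, and because $j^{*}\OO_{\p^{N}}(1)$ has class $H$ on $\p(\cE)$, we have $j^{*}\omega_{X}\cong sH|_{Y}$. The crepant hypothesis therefore yields the linear relation
\begin{equation*}
(a-k-s)\,H|_{Y}+(b+N-k-1)\,F|_{Y}=0 \quad\text{in } \pic(Y). \qquad (\star)
\end{equation*}

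Since $k>2$, the idea is to extract numerical information from $(\star)$ by pairing it with the $1$-cycles $(H|_{Y})^{k-2}$ and $(H|_{Y})^{k-3}\cdot F|_{Y}$ on $Y$. Using the projection formula and the intersection numbers $H^{k}=f$, $H^{k-1}F=1$ on $\p(\cE)$ together with the key vanishing $F^{2}=0$ (two distinct fibres of $\pi$ are disjoint), a direct computation produces the two equations
\begin{equation*}
(a-k-s)(af+b)+(b+N-k-1)\,a=0, \qquad (a-k-s)\,a=0.
\end{equation*}
The second equation forces $a=0$ or $a=k+s$.

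If $a=0$, then $Y\sim bF$, and irreducibility of $Y$ forces $Y\sim F$, so that $X=j(Y)$ is a linear $\p^{k-1}$ and $\omega_{X}=\OO_{X}(-k)$ gives $s=-k$, contradicting the hypothesis $k>-s$. Hence $a=k+s>0$, and the first equation then gives $b=k+1-N$, which is the first alternative of the statement. The second alternative, $a=k+s+1$, $b=0$ with $N=k$, corresponds to the boundary case in which the scroll $S_{a_{1},\dotsc,a_{k}}$ coincides with $\p^{N}$; here one uses the identity $E\sim H-fF$ (with $f=1$) to rewrite the class, showing via the total transform formalism of Lemma~\ref{ritabis} that $X$ is simply a hypersurface of $\p^{N}$ of degree $k+s+1$. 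The main obstacle in the argument is precisely this rewriting: one must check carefully that the alternative presentation $aH+bF$ with $b=0$ survives the passage from $\cacl(\p(\cE))$ to $\Div(S_{a_{1},\dotsc,a_{k}})$, which is handled through the dictionary of Corollary~\ref{ferraro} and Lemma~\ref{ritabis}.
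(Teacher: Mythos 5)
Your argument is correct in substance, but it follows a genuinely different and more economical route than the paper. You restrict the crepancy relation to $Y$ immediately and extract numbers by capping the trivial class $(a-k-s)H|_{Y}+(b+N-k-1)F|_{Y}$ with the $1$-cycles $(H|_{Y})^{k-2}$ and $(H|_{Y})^{k-3}\cdot F|_{Y}$, using only $H^{k}=f$, $H^{k-1}F=1$, $F^{2}=0$ and the projection formula; this yields $(a-k-s)a=0$, and after discarding $a=0$ (then $Y$ is a fibre, forcing $s=-k$ against $k>-s$; alternatively your first equation gives $b=0$, impossible for an effective $Y$) one gets $a=k+s$ and, since $k+s>0$, $b=k+1-N$. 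The paper instead introduces the divisor $\eta=(a-k-s)H+(b+N-k-1)F$ on $\p(\cE)$, proves by cohomological arguments (vanishing of $h^{0}$ and $h^{1}$ of $\OO_{\p(\cE)}(\eta-Y)$ via pushforwards along $\pi$, together with $h^{0}(Y,\OO_{Y}(\eta))=1$) that $\eta$ is effective and rigid, deduces $\eta\sim mE$, and only then exploits the numerical conditions $\eta\cdot Y^{k-1}=0$ and $\eta\cdot E^{k-1}=0$; moreover it splits the proof according to $\cod(V,S_{a_1,\dotsc,a_k})>2$ or $=2$, the latter handled through the Weil-divisor dictionary of Corollary~\ref{ferraro} and Lemma~\ref{ritabis}. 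Your route avoids the cohomology and the case division altogether, and since the conclusion of the lemma is a disjunction, establishing the first disjunct unconditionally suffices.

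One caveat: your closing paragraph on the second alternative is not a proof and is in tension with your own equations, which exclude $a=k+s+1$, $b=0$ outright (pairing with $(H|_{Y})^{k-3}\cdot F|_{Y}$ would give $(a-k-s)a=k+s+1\neq 0$ there). This is not a defect of your main argument: when $N=k$ your conclusion reads $a=k+s$, $b=1$, i.e.\ $Y\sim(k+s+1)H-E$, the proper transform of a degree-$(k+s+1)$ hypersurface through the vertex, and this is the configuration compatible with crepancy; for $Y\sim(k+s+1)H$ one has $\omega_{Y}\otimes (j^{*}\omega_{X})^{-1}\cong\OO_{Y}(E)$ with $E\cap Y$ a nonempty effective divisor on the integral projective $Y$, hence nontrivial, so that case cannot occur under the crepant hypothesis (it is the blow-up of a codimension-two centre, of discrepancy one). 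So the appeal to the total-transform formalism of Lemma~\ref{ritabis} should simply be removed rather than ``checked carefully''; as written it suggests a gap where there is none.
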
 
\begin{proof} The assumption that $X$ is $s$-subcanonical means that 
    $\OO_{X}(s)=\omega_{X}$. The assumption that $X$ is 
    Gorenstein implies that we can use adjunction theory to write 
    $\omega_{X}=\OO_{X}(X)\otimes_{\OO_{X}}\omega_{S_{a_1,\dotsc,a_k}}$. 
    Now we distinguish two cases according to the codimension of the 
    vertex $V$ of the RNS $S_{a_1,\dotsc,a_k}$. To ease reading we 
    remind the reader that $H\in H^{0}(\p({\cE}), 
    \OO_{\p(\cE)}(1))$. We denote by $E$ a general effective divisor of 
    the linear system $\abs{H-\pi^{*}a_{k}F}$.

\begin{1c} $\cod(V,S_{a_1,\dotsc,a_k})> 2$. 
    
\end{1c}    
    
    Let us define a divisor $\eta$ on $\p(\cE)$ as
\begin{equation*}
\eta:=(a-k-s)H+(b+N-k-1)F.
\end{equation*}
First we show that $\eta$ is an effective divisor which does not move.
By definition 
\begin{equation}\label{cinoni}
    \eta-Y=(-k-s)H+(N-k-1)F\sim K_{\p(\cE)}-sH.
    \end{equation} 
    By Lemma 
\ref{rita}, numerical equivalence over $\p(\cE)$ translates to linear 
equivalence over $S_{a_1,\dotsc,a_k}$ and, again by Lemma \ref{rita} 
we have $j_{*}\OO_Y(\eta)=\OO_{X}$ since 
$j_{*}\OO_Y(\eta)=\omega_{X}(-s)$. 
Consider the following exact sequence:
\begin{equation*}
0\to\OO_{\p(\cE)}(\eta-Y)\to \OO_{\p(\cE)}(\eta)\to \OO_Y(\eta)\to 0.
\end{equation*}
Since  $s>-k$ then, by \eqref{cinoni}, $\OO_{\p(\cE)}(\eta-Y)$ has 
no sections, and we have that 
$h^0(\p(\cE),\OO_{\p(\cE)}(\eta-Y))=0$. 

Now it is enough 
to show that $h^0(\p(\cE),\OO_{\p(\cE)}(\eta))=1$.
Since $h^0(Y,\OO_Y(\eta))=h^{0}(X,j_{*}\OO_Y(\eta))=
h^{0}(X,\OO_X)=1$, then it is sufficient to show that 
$h^{1}(\OO_{\p(\cE)}(\eta-Y))=0$. Since $N=k-1+f$ and 
$\pic(\p(\cE))= [H]\mathbb Z\oplus [F]\mathbb Z$ we can write 
$\eta-Y=(-k-s)H -fF$. Thus we have to show that 
$h^1(\p(\cE),\OO_{\p(\cE)}(-k-s)\otimes\OO_{\p(\cE)}((-fF))=0$. 
By \cite[Exercise III.8.4(a)]{H} we deduce that $R^i\pi_*(\OO_{\p(\cE)}(-k-s))=0$ $\forall i>0$ (recall that $-k-s<0$). 
Then by  \cite[Exercise III.8.1]{H}, $H^1(\p(\cE),\OO_{\p(\cE)}(-k-s))\cong H^1(\p^1,\pi_*(\OO_{\p(\cE)}(-k-s)))$, 
but by again  \cite[Exercise III.8.4(a)]{H}, $\pi_*(\OO_{\p(\cE)}(-k-s))=0$. Since $\OO_{\p(\cE)}(F))\cong \pi^*\OO_{\p^1}(1)$, we 
can conclude by the projection formula (\cite[Exercise III.8.3]{H}).

It then follows that $\eta$ is an effective divisor such that 
$h^{0}(\p(\cE), \OO_{\p(\cE)}(\eta))=1$. By 
a trivial numerical computation it follows that these conditions 
imply that there exists an $m\in\mathbb N$ such that $\eta\sim 
mE=m(H-a_{k}F)$ and 
$a_{k}>a_{k-1}$. Moreover, by induction starting from the sequence
\begin{equation*}
0\to\OO_{Y}(\eta-Y)\to \OO_{Y}(\eta)\to \OO_{Y_{|Y}}(\eta)\to 0,
\end{equation*}
we get $\eta\cdot Y^{k-1}=0$.
Then the numerical condition 
$\eta\cdot Y^{k-1}=0$ is equivalent to 
\begin{equation}\label{eaeaea}
ma^{k-2}[a\cdot\sum_{i=1}^{k-1}a_{i} +(k-1)b]=0
\end{equation}
since $H^{k}=\deg S_{ a_1,\dotsc,a_k}=\sum_{i=1}^{k}a_{i}=f$. 
By Equation \eqref{eaeaea}, it follows that either $m=0$, which is the claim, 
or 
\begin{equation}\label{etaetaeta}
b= \frac{a(a_k-f)}{k-1}. 
\end{equation}

If we proceed as we did for $Y$, with $E=H-a_{k}F$, again 
  by induction, starting from the sequence
\begin{equation*}
0\to\OO_E(\eta-Y)\to \OO_E(\eta)\to \OO_{E_{|E}}(\eta)\to 0,
\end{equation*}
we deduce $\eta\cdot E^{k-1}=0$.
From this equation, we infer 
\begin{equation}\label{eaeaea1}
b=a((k-1)a_k -f). 
\end{equation}
If $a\neq 0$, we have, from Equations \eqref{etaetaeta} and \eqref{eaeaea1}, that 
\begin{equation}
(k-2)(a_kk-f)=0; 
\end{equation}
since $k>2$, we have that $f=ka_k$, which cannot happen, since we have supposed that $a_k>a_{k-1}$.

\begin{2c}
$\cod(V,S_{a_1,\dotsc,a_k})=2$.
\end{2c}

We have 
denoted by $E$ the exceptional divisor of 
$j\colon\p(\cE)\rightarrow S_{a_1,\dotsc,a_k}$. The  
argument used above shows that 
$j_{*}\OO_{\p(\cE)}(\eta)=\OO_{S_{a_1,\dotsc,a_k}}$ then, by Corollary 
\ref{ferraro} $(2)$ and by Lemma \ref{ritabis}, we have that 
$(J[\eta])^{*}=0$. In particular there exists an $m\in \mathbb Z$ 
such that $\eta\sim mE$. Then $-s-k+a=m$ and $b+N-k-1=mf$ and
\begin{equation}\label{cinonibis}
     \eta=mE\sim K_{\p(\cE)}+Y-sH.
\end{equation}

In particular since $\OO_{Y}(sH)=j^{*}\OO_{X}(s)$ we have 
$\OO_{Y}(mE)\simeq\omega_{Y}-j^{*}\omega_{X}=0$. Hence $b=0$ or 
$m=0$. 

Assume $m\neq 0$. By Equation \eqref{cinonibis} we have that 
$-s-k+a=m$ and $N-k-1=-mf$. Since $N=f+k-1$ it follows that $(m+1)f=2$ 
and the only solution---since $f\geq 0$ and we 
have assumed $m\neq 0$---is $f=1$ and $m=1$. 
\end{proof}

\begin{rmk}
The case $k=2$ in Lemma \ref{lem:pic} was considered in \cite[Lemma 15]{DZ2}. 

Moreover, if $X$ is smooth, it is well known that $k\ge -s$ and $s=-k$
if and only if our variety is $X$ is a $\p^{k-1}$. The same results 
holds if $X$ is 
normal and Gorenstein: see \cite[Proposition 3.4, Paragraph 4.13 and 
Theorem 5.15]{Fu2}. 
\end{rmk}
We can sum up the above general results in the following:

\begin{prop}\label{prop:4gonas} Let $k\in \mathbb{N}$ be such that $k> 2$,  
and let $s\in\ZZ$ be such that $k>-s$.
Let $X\subset S_{a_1,\dotsc,a_k}$ be a Gorenstein irreducible
$1$-codimensional subvariety 
and let $Y\subset\p(\cE)$ be its $j$-proper transform. Then $X$ is general 
in its linear equivalence class and $s$-subcanonical, 
and $j_{\mid Y}\colon Y\rightarrow 
X$ is crepant, if and only if either $Y$ is a general element of 
$\abs{(s+k)H+(2-f) F}$, or $k=N$ and $Y$ is a hypersurface of degree $(s+k+1)$. 
\end{prop}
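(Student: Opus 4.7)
The plan is to establish the two directions separately. For the forward implication I would appeal directly to Lemma \ref{lem:pic}: writing $Y \sim aH + bF$ in $\pic(\p(\cE)) = \ZZ[H]\oplus\ZZ[F]$, the hypotheses on $X$ (Gorenstein irreducible, $s$-subcanonical, crepant desingularisation $j|_Y$) together with $k > -s$ are exactly those of Lemma \ref{lem:pic}. That lemma gives either $a = s+k$ and $b = k+1-N$, or $a = s+k+1$, $b=0$ and $N = k$. Since $N = k-1+f$, the first case translates to $Y \in \abs{(s+k)H + (2-f)F}$; the second forces $S_{a_1,\dotsc,a_k} = \p^N$ with $H$ the hyperplane class, so $Y$ is just a hypersurface of degree $s+k+1$.

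The converse is the substantive content. The case $k=N$ is immediate: standard adjunction on $\p^N$ gives $\omega_X = \OO_X(s+k+1-N-1) = \OO_X(s)$ and $j|_Y$ is an isomorphism, hence trivially crepant. In the remaining case, I would take $Y$ a general element of $\abs{(s+k)H+(2-f)F}$ and use the formula $K_{\p(\cE)} = -kH + (N-k-1)F = -kH + (f-2)F$ together with adjunction on the smooth ambient variety to compute
\begin{equation*}
\omega_Y = (K_{\p(\cE)} + Y)\big|_Y = (sH)\big|_Y \cong (j|_Y)^*\OO_X(s).
\end{equation*}
A Bertini argument on $\p(\cE) \setminus j^{-1}(V)$ ensures that $Y$ is smooth there, irreducible, and maps birationally onto $X := J(Y)$, which inherits normality and Cohen-Macaulayness as a Weil divisor on the aCM scroll $S_{a_1,\dotsc,a_k}$. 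Since $j|_Y$ is birational with $j_*\OO_Y = \OO_X$, the projection formula yields $j_*\omega_Y = \OO_X(s)$. Identifying this pushforward with $\omega_X$---via Lemma \ref{rita} when $\cod(V,S_{a_1,\dotsc,a_k})>2$ and via the integral total transform of Lemma \ref{ritabis} when $\cod(V,S_{a_1,\dotsc,a_k})=2$---one concludes $\omega_X \cong \OO_X(s)$. This simultaneously makes $X$ Gorenstein $s$-subcanonical and, pulling back, gives $(j|_Y)^*\omega_X = \omega_Y$, so $j|_Y$ is crepant.

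The main obstacle is the final identification $j_*\omega_Y = \omega_X$ in the regime $\cod(V,S_{a_1,\dotsc,a_k}) = 2$, where $X$ is only a Weil (not Cartier) divisor on the scroll. To handle this one has to feed the class of $Y$ through the integral-total-transform bookkeeping of Lemma \ref{ritabis}, verify that the relevant relation reproduces the normalisation $\eta \sim mE$ with $m=0$ appearing in the proof of Lemma \ref{lem:pic}, and check enough higher direct image vanishing so that Grothendieck duality yields $j_*\omega_Y = \omega_X$ with no correction term coming from the exceptional divisor $E$. Once this reconciliation is carried out, the remainder of the argument is a formal consequence of adjunction on $\p(\cE)$ and the projection formula.
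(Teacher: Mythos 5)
Your proposal is correct and follows essentially the same route as the paper: the forward direction is exactly the reduction to Lemma \ref{lem:pic} (with $b=k+1-N=2-f$), and the converse is the same adjunction computation $K_{\p(\cE)}+Y\sim sH$ on $\p(\cE)$, with the $N=k$ case treated via the blow-up/hypersurface description. The identification $j_*\omega_Y\cong\omega_X$ in the $\cod(V,S_{a_1,\dotsc,a_k})=2$ case that you flag as the remaining technical point is precisely what the paper also settles (very tersely) by referring back to the second case of the proof of Lemma \ref{lem:pic} together with adjunction, so your outline matches the published argument.
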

\begin{proof} By Lemma \ref{lem:pic} 
    we have only to show that if $Y\in |(s+k)H+(k+1-N) F|$ 
    or $Y\in |(s+k+1)H|$ and $N=k$ is general 
    then $X=j(Y)$ is an $s$-subcanonical Gorenstein irreducible
$1$-codimensional subvariety and  $j_{\mid Y}\colon Y\rightarrow 
X$ is crepant. In the case where $Y\in |(s+k+1)H|$ the morphism 
$S_{a_{1},\ldots, a_{k}}\to\p^{N}$ is the blow-up along the a 
codimension $2$ linear subspace of $\p^{N}$. Hence $Y$ is the total 
transform of a general hypersurface $X$ of degree $(s+k+1)$. 
Now the assumption that $Y$ is general forces that 
$X$ is smooth. If $\cod(V,S_{a_1,\dotsc,a_k})>2$ then the claim 
follows. 
%
If $\cod(V,S_{a_1,\dotsc,a_k})=2$ and $f\geq 2$ then by the 
argument of the second case of the proof of Lemma \ref{lem:pic} we 
have that, taking $Y\in |(s+k)H+(2-f) F|$, it necessarily follows by 
adjunction that 
$j_{Y}\colon Y\rightarrow X=j(Y)$ is crepant.
\end{proof}

\subsubsection{On aG-subvarieties in a RNS}
Next, we show that an $s$-subcanonical, 
$1$-codimensional subvariety 
$X\subset S_{a_1,\dotsc,a_k}$ such that $j_{\mid Y}\colon Y\rightarrow 
X$ is crepant is aG: 

\begin{prop}\label{miazias} Let $k\in \NN$ be such that $k> 2$ and 
let $s\in\ZZ$ be such that $k>-s$.
Let $X\subset S_{a_1,\dotsc,a_k}$ be a Gorenstein irreducible
$1$-codimensional subvariety, $s$-subcanonical, general 
in its linearly equivalence class, and such that $j_{\mid Y}\colon Y\rightarrow 
X$ is crepant, where $Y\subset\p(\cE)$ is its $j$-proper transform. 
Then $X$ is aG.
\end{prop}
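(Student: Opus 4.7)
The strategy is to verify the hypotheses of Theorem~\ref{secondo}, which then directly yields that $X$ is aG. Since $X$ is $s$-subcanonical with normal Gorenstein canonical singularities by assumption, what remains is to establish $\ell$-normality for $1 \le \ell \le n + s - 1$ (with $n := k-1 = \dim X$) and, when $s \ge 0$, the intermediate vanishings $h^i(X, \OO_X(m)) = 0$ for $1 \le i \le n-1$ and $0 \le m \le s$. The alternative $k = N$ in Proposition~\ref{prop:4gonas} exhibits $X$ as a hypersurface in $\p^N$, which is aG for free by the Koszul complex; so I focus on the scroll case $Y \sim (s+k)H + (2-f)F$.

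First I would transfer both tasks from $X$ to the resolution $j_{\mid Y} : Y \to X$. Since $j_{\mid Y}$ is crepant and $X$ has canonical singularities, $X$ has rational singularities, hence $Rj_{Y,*}\OO_Y = \OO_X$ and by Leray $h^i(X, \OO_X(m)) = h^i(Y, \OO_Y(mH))$ for every $i, m$. The ambient scroll $S_{a_1,\dotsc,a_k}$ is aCM and $j : \p(\cE) \to S_{a_1,\dotsc,a_k}$ is a rational resolution with $j_{*} \OO_{\p(\cE)}(\ell H) = \OO_{S_{a_1,\dotsc,a_k}}(\ell)$, so $\ell$-normality of $X \subset \p^N$ reduces, via the ideal exact sequence
\begin{equation*}
0 \to \OO_{\p(\cE)}(\ell H - Y) \to \OO_{\p(\cE)}(\ell H) \to \OO_Y(\ell H) \to 0,
\end{equation*}
to the vanishing of $H^1(\p(\cE), \OO_{\p(\cE)}(\ell H - Y))$, once combined with the aCM character of the scroll to kill $H^i(\p^N, \II_{S_{a_1,\dotsc,a_k}}(\ell))$. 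These cohomologies can all be computed via the Leray spectral sequence for the projective bundle $\pi : \p(\cE) \to \p^1$, using that $\pi_{*} \OO_{\p(\cE)}(aH + bF) = (\sym^a \cE)(b)$ for $a \ge 0$ is a direct sum of line bundles $\OO_{\p^1}(d)$ with $d \ge 0$, and relative Serre duality for $a \le -k$.

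The main difficulty will be controlling the boundary cases: when the numerical parameters $\ell, s, k, f$ bring $(\ell - s - k)H + (f-2)F$ close to the edge of acyclicity on $\p(\cE)$, Serre duality on $\p(\cE)$, whose canonical class is $K_{\p(\cE)} = -kH + (N-k-1)F$, will be needed to translate the computation into an acyclic range. Particular care is needed when $\cod(V, S_{a_1,\dotsc,a_k}) = 2$, where one must consistently work with the integral total transforms of Lemma~\ref{ritabis}, and when $s < 0$, where the extra hypothesis of Theorem~\ref{secondo} is vacuous but the $\ell$-normality window $1 \le \ell \le n + s - 1$ is correspondingly narrower, so the direct numerical estimate must be sharp. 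Once all these vanishings are established, Theorem~\ref{secondo} concludes the proof.
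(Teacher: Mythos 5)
Your route is genuinely different from the paper's: the paper never invokes Theorem~\ref{secondo} here, but instead proves the aCM property directly, showing $h^i(\p^N,\II_X(m))=0$ for \emph{all} $m\in\ZZ$ and $1\le i\le k-1$ by means of the identification $\II_{X,S}(m)\cong\OO_S(m-X)=j_*\OO_{\p(\cE)}(K_{\p(\cE)}+(m-s)H)$ (Proposition~\ref{prop:4gonas} together with Lemma~\ref{rita}, resp.\ Lemma~\ref{ritabis}), the vanishing of the higher direct images of $j$ (where $f\ge 2$ is the crucial inequality), Kodaira vanishing and Serre duality on $\p(\cE)$, and pushforward along $\pi$; aG then follows from Proposition~\ref{senzanome}. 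The first genuine gap in your plan is the transfer step $Rj_{Y*}\OO_Y=\OO_X$, which you need both for $h^i(X,\OO_X(m))=h^i(Y,\OO_Y(mH))$ and for $H^0(X,\OO_X(\ell))=H^0(Y,\OO_Y(\ell H))$ in your reduction of $\ell$-normality. The proposition does not assume that $X$ is normal with canonical (hence rational) singularities: it only assumes $X$ Gorenstein, irreducible, general in its class, with $j_{\mid Y}$ crepant. Crepancy of this one morphism yields canonicity only if $Y$ itself is smooth (or at worst canonical), and that is not guaranteed: $Y$ is a general member of $\abs{(s+k)H+(2-f)F}$ with $2-f<0$ as soon as $f>2$, so Bertini does not give smoothness. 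The paper's proof is arranged precisely so that no comparison between cohomology on $X$ and on $Y$ is ever needed; $X$ enters only through its ideal sheaf in $\p^N$, and all computations happen on $\p(\cE)$ and are pushed down to the scroll.

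Second, your reduction does not cover the whole stated range. The hypothesis is $k>-s$, i.e.\ $k+s\ge 1$, while Theorem~\ref{secondo} asks for $\ell$-normality in the window $1\le\ell\le n+s-1=k+s-2$; when $k+s\le 2$ this window is empty and (since then $s<0$) the auxiliary vanishing hypothesis is vacuous, so Theorem~\ref{secondo} would have to produce projective normality with no input. Its proof (Green-type generation of $\cR$ in degrees $\le n+s-1$) does not function when $n+s-1\le 0$, and indeed an isomorphic projection of an anticanonically embedded del Pezzo surface satisfies those vacuous hypotheses without being linearly normal, hence is not aG; so the theorem cannot be invoked as a black box in that regime, whereas the paper's direct computation handles $k+s\in\{1,2\}$ uniformly. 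In the main range $k+s>2$ your cohomological estimates do go through (e.g.\ $H^1(\p(\cE),\OO_{\p(\cE)}(\ell H-Y))=0$ for $\ell\le s+k-1$ since $k-1\ge 2$ kills $R^1\pi_*$, and relative duality gives $R^{k-1}\pi_*\OO_{\p(\cE)}(mH-Y)\cong(\sym^{s-m}\cE)^\vee(-2)$, which has no sections, for $0\le m\le s$), so your plan is repairable there provided you add and justify the normality/rationality of $X$; but to obtain the proposition as stated you still need a direct argument of the paper's type for the low values of $k+s$.
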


\begin{proof} 
Set $S:=S_{ a_1,\dotsc,a_k}\subset\mathbb P^N$. 
By the natural inclusions $X\subset S\subset\p^N$ we can
construct the following exact and commutative diagram of sheaves:
\begin{equation*}
\begin{CD}
@.0@.@. @.\\
@.@AAA@. @.\\
@.\II_{X,S}(m)@.@. 0@. \\
@.@AAA@. @AAA\\
0 @>>> \II_X(m) @>>> \OO_{\p^N}(m) @> >> \OO_{X}(m) @>>> 0\\ 
@. @AAA @| @AAA\\
0 @>>> \II_{S}(m) @>>> \OO_{\p^N}(m) 
@>>> \OO_{S}(m) @>>> 0\\ 
@.@AAA@. @AAA\\
@.0@.@. \OO_{S}(m)\otimes_{\OO_{S}}\OO_{S}(-X) \\
@.@.@. @AAA\\
@.@.@. 0
\end{CD}
\end{equation*}
We need to show that 
$h^i(\p^{N},\II_X(m))=0$ for all $m\in\ZZ $ and $1\le i\le k-1$. 
Set $\OO_{S}(m-X):=\OO_{S}(m)\otimes_{\OO_{S}}\OO_{S}(-X)$.
By the above diagram, 
$\II_{X,S}(m)\cong \OO_{S}(m-X)$ and, since $S$
is aCM, we have $h^i(\p^{N},\II_S(m))=0$ for all $m\in \Z$, $1\le i \le k$; 
therefore it is sufficient to show that $h^i(S,\OO_S(m-X))=0$ for all
$m\in \ZZ$, $1\le i \le k-1$.

\begin{1c}{$\cod(V,S_{a_1,\dotsc,a_k})> 2$ or 
$N>k$. }
\end{1c}

This assumption, with Proposition \ref{prop:4gonas},  
gives $\OO_S(m-X)=j_{*}\OO_{\p(\cE)}(K_{\p(\cE)}+(m-s)H)$. 
It also gives the crucial inequality: $f=N-k+1\geq 2$.

Now, since 
$K_{\p(\cE)}+(m-s)H\sim (m-s-k)H+(f-2)F$, then by the projection 
formula (see \cite[exercise III.8.3]{H}) 
applied to the morphism $j$, we have that 
$R^{i}j_{*}\OO_{\p(\cE)}(K_{\p(\cE)}+(m-s)H)=
R^{i}j_{*}\OO_{\p(\cE)}((f-2)F)\otimes_{\OO_{S}}\OO_{S}(m-s-k)$. 
The variety $W$ contracted by $j$ 
is a rational scroll and, since $f\geq 2$, we have both
$H^{i}(W,\OO_{W}((f-2)F))=0$ and hence 
$R^{i}j_{*}\OO_{\p(\cE)}((f-2)F)=0$, for 
$1\leq i\leq k-1$. By \cite[Exercise III.8.1]{H}, we then have that 
$H^{i}(S, \OO_S(m-X))=H^{i}(\p(\cE),\OO_{\p(\cE)}(K_{\p(\cE)}+(m-s)H)))$, 
where $m\in\Z$ 
and $i\geq 0$. To show the claim we need to show that 
$h^i(\p(\cE),\OO_{\p(\cE)}(K_{\p(\cE)}+\ell H))=0$, where $-s\leq \ell\leq 0$. In 
fact, 
for all $\ell\in \ZZ$, $1\le i \le k-1$ it holds that 
$h^i(\p(\cE),\OO_{\p(\cE)}(K_{\p(\cE)}+\ell H))=0$. 
If $\ell>0$, then 
$h^i(\p(\cE),\OO_{\p(\cE)}(K_{\p(\cE)}+\ell H))=0$, by the
Kodaira vanishing theorem. 
If $\ell\leq 0$, then it is sufficient to prove, by Serre duality, that
$h^i(\p(\cE),\OO_{\p(\cE)}(t H))=0$, with $t\ge 0$, 
$1\le i \le k-1$. 
Let $\pi$ be, as usual, the natural projection map,
$\pi\colon\p(\cE)\to\p^1$; first of all, we have that 
$R^i\pi_*\OO_X(t H)=0$ if $t\ge 0$, 
$1\le i \le k-1$ (see for example \cite[Exercise III.8.4(a)]{H}).
But then $H^i(\p(\cE),\OO_{\p(\cE)}(t H))=
H^i(\p^1,\pi_*\OO_{\p(\cE)}(t H))$ (see for
example \cite[Exercise III.8.1]{H}) and we conclude, since 
$\pi_*\OO_{\p(\cE)}(\ell H)\cong
\sym^{t}(\OO_{\p^1}(a_1)\oplus\dotsm\oplus\OO_{\p^1}(a_k))$ 
(again by \cite[Exercise III.8.4(a)]{H}). We have shown that $X$ is 
aG.

\begin{2c}{ $\cod(V,S_{a_1,\dotsc,a_k})=2$ and 
$f=1$. } 
\end{2c}

In this case $j\colon S_{0,\ldots,0,1}\rightarrow \p^{k}$ and by 
Proposition \ref{prop:4gonas}, $Y\in |(s+1+k)H|$. Hence the claim is 
easy.
\end{proof}

With the hypothesis of Proposition \ref{miazias}, 
we can give a converse to Proposition~\ref{prop:4gonas}: that is
we can characterise  $s$-subcanonically regular varieties with a crepant 
resolution. This is the Main Theorem of the introduction.

\begin{thm}\label{miofratellos}  Let $k\in \NN$ be such that $k> 2$ and  
   let $s\in\ZZ$ be such that $k+s>2$.
Let $(X,\cL)$ be a polarised $(k-1)$-dimensional variety,
such that $X\subset\abs{\cL}^\vee=:\check\p^N$ 
is an $s$-subcanonical variety  with crepant resolution; then   
$X$ is contained as a codimension one subvariety in a rational 
normal $k$-dimensional scroll 
$S_{a_1,\dotsc, a_k}$ or a quadric or 
a cone on the Veronese surface $v_{2}(\p^{2})$ 
if and only if it is  $s$-subcanonically regular and 
for every $k$-tuple of general sections
$\eta_1,\dotsc, \eta_k\in H^{0}(X,\cL)$,
$F_{\eta_{1},\dotsc, \eta_{k}}\in \mathbb C[x_{0},\dotsc, x_{N-k}]$ 
is a Fermat hypersurface of degree $(s+k)$. 
\end{thm}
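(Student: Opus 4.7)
The plan is to prove the equivalence via its two implications, with the Apolarity Lemma (Lemma~\ref{lem:apo}) as the bridge between the geometric and algebraic formulations.

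For the \emph{forward direction} (geometry implies Fermat), I would first establish $s$-subcanonical regularity by case analysis on the type of the containing variety $Y$: if $X$ lies as a codimension-one subvariety in a scroll $S_{a_1,\dotsc,a_k}$, Proposition~\ref{miazias} applies (exploiting the crepancy hypothesis); if $X$ lies in a quadric, Lemma~\ref{quadrica} applies; and for a cone over the Veronese surface $v_2(\p^2)$, Corollary~\ref{conovero} together with Proposition~\ref{precedente} suffices. For the Fermat property, I would fix a generic $\eta=(\eta_1,\dotsc,\eta_k)$ and let $\Lambda:=V(\eta_1,\dotsc,\eta_k)\subset\check\p^N$, a linear subspace of dimension $N-k$. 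Any variety $Y$ of minimal degree meets the complementary-dimensional linear space $\Lambda$ in $\deg Y = N-k+1$ points in linearly general position. Under the Macaulay identification $A:=S_X/(\eta_1,\dotsc,\eta_k)\cong T/F^\perp$ supplied by Theorem~\ref{thm:4s}, the inclusion $I_Y\subset I_X$ descends to $I(Y\cap\Lambda)\subset F^\perp$, showing $Y\cap\Lambda$ is apolar to $F$; the Apolarity Lemma then yields $F=\sum_{i=0}^{N-k}\lambda_i\ell_i^{s+k}$, and choosing coordinates to place the $N-k+1$ points at the coordinate vertices of $\Lambda$ makes $F$ Fermat.

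For the \emph{converse direction} (Fermat implies geometry), I would apply the Apolarity Lemma in reverse: for each generic $\eta$, the Fermat form of $F:=F_{\eta_1,\dotsc,\eta_k}$ produces a zero-dimensional apolar scheme $\Gamma_\eta\subset\Lambda_\eta$ of length $N-k+1$ in linearly general position. As $\eta$ varies over a dense open subset of the Grassmannian $\Gr(m,N)$ with $m=N-k$, the $\Gamma_\eta$'s assemble into an algebraic family, and I would define $Y\subset\check\p^N$ as the Zariski closure of the union of their supports. Each generic slice $Y\cap\Lambda_\eta=\Gamma_\eta$ has length $N-k+1$ and spans $\Lambda_\eta$, so $\dim Y=k$ and $\deg Y=N-k+1$, whence $Y$ has minimal degree in its linear span. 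By the classical Del Pezzo--Bertini classification, $Y$ is a rational normal scroll, a quadric, or a cone over $v_2(\p^2)$; Lemma~\ref{lem:pic} and the crepancy hypothesis confine the scroll invariants to the allowed range.

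The \emph{main obstacle} will be verifying $X\subset Y$ in the converse direction. Since $\Gamma_\eta\subset\Lambda_\eta$ and $X\cap\Lambda_\eta=\emptyset$ for generic $\eta$, $X$ is not literally covered by the pointwise union of the $\Gamma_\eta$'s; it must be recovered by specialising $\eta$ to the codimension-one locus where $\Lambda_\eta$ meets $X$ nontrivially, and identifying $X$ as an irreducible component of the flat limit of the $\Gamma_\eta$'s along this specialisation. Carrying this out rigorously requires constructing the incidence variety in $\check\p^N\times\Gr(m,N)$, establishing flatness of the resulting family, and comparing its boundary components with $X$; the conditions $k>2$ and $k+s>2$, together with the crepancy hypothesis, should preclude the degenerations that would separate $X$ from $Y$.
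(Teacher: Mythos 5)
Your forward implication is essentially the paper's: establish $s$-subcanonical regularity case by case (Proposition~\ref{miazias} for scrolls, Lemma~\ref{quadrica} for quadrics, Corollary~\ref{conovero} for the Veronese cone), then slice by the generic linear space, use aCM-ness to descend the ideal inclusion, and invoke the Apolarity Lemma~\ref{lem:apo} together with Theorem~\ref{thm:4s}. The converse, however, has a genuine gap. Defining $Y$ as the closure of the union of the apolar point-sets $\Gamma_\eta$ does not obviously produce a $k$-dimensional variety of minimal degree: your key claim that the generic slice satisfies $Y\cap\Lambda_\eta=\Gamma_\eta$ is exactly what needs proof, since nothing forces the $N-k+1$ apolar points to move coherently inside a fixed $k$-fold rather than sweeping out a higher-dimensional locus as $\eta$ varies over the $k(N-k+1)$-dimensional Grassmannian. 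Worse, your proposed repair of the ``main obstacle'' $X\subset Y$ cannot work even in principle: each $\Gamma_\eta$ is a zero-dimensional scheme of length $N-k+1$, so every flat limit of the family is again zero-dimensional of that length, and it cannot have the $(k-1)$-dimensional variety $X$ (with $k-1\ge 2$) as an irreducible component. Finally, your converse never actually uses the $s$-subcanonical regularity of $X$ (i.e.\ projective normality) nor the hypothesis $s+k>2$, beyond the hope that they ``preclude degenerations''; both are essential.

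The paper's converse avoids the $X\subset Y$ problem altogether by producing $Y$ as the intersection of the quadrics through $X$, so that the containment is automatic and the work goes into showing $Y$ is $k$-dimensional of minimal degree. Concretely: after normalising $F_{\eta_1,\dotsc,\eta_k}=x_0^{s+k}+\dotsb+x_{N-k}^{s+k}$, the hypothesis $s+k>2$ gives $F^\perp=(\de_i\de_j,\ \de_i^{s+k}-\de_j^{s+k})$ (equation \eqref{eq:riducibs}); lifting through the Artinian reduction $S_X/(\eta_1,\dotsc,\eta_k)$ shows $I(X)$ contains $\binom{N-k+1}{2}$ quadrics of the form $\de_i\de_j+\sum_\ell\eta_\ell L^\ell_{i,j}$, and projective normality (from $s$-subcanonical regularity) pins down $h^0(\p^N,\II_X(2))$. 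One then passes to a general curve section and runs the Enriques--Petri--Babbage/Castelnuovo argument (conditions imposed by the hyperplane section on quadrics, Clifford's theorem, Castelnuovo's Lemma) to conclude that the intersection of the quadrics containing $X$ cuts a general $\p^{N+1-k}$ in a rational normal curve, hence is an irreducible $k$-dimensional variety of minimal degree containing $X$, classified by Eisenbud--Harris. If you want to keep your construction via the $\Gamma_\eta$'s, you would in effect have to prove this quadric-theoretic statement anyway, since the quadrics through $X$ are the only robust source of the containing variety.
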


\begin{proof} 
Since for $N=k$ there is nothing to prove, we can suppose $N>k$.

If $X$ is contained as a divisor in a variety $S$ of minimal degree, 
then by Lemma \ref{quadrica}, by Corollary \ref{conovero} 
and by Proposition \ref{miazias}, 
$X$ is $s$-subcanonically regular. Take $k$ general sections $\eta_1,\dotsc \eta_k\in H^{0}(X,\cL)$ and 
consider $\Gamma:=S \cap V(\eta_1,\dotsc, 
\eta_k)\subset\check\p^{N-k}$. This is a zero-dimensional scheme of length 
$\deg S$ and, because $S$ is aCM, the (saturated) homogeneous ideal $I(\Gamma)$ 
of $\Gamma$ is obtained by adding the linear forms $\eta_1,\dotsc, 
\eta_k$ to $I(S)$, so $I(\Gamma)=(I(S), \eta_1,\dotsc, \eta_k)$. 

%
Then by the Apolarity Lemma~\ref{lem:apo},  $\Gamma$ 
is apolar to 
a Fermat  hypersurface 
$F_{\eta_1,\dotsc, \eta_k}\in \mathbb C[x_{0},\dotsc, x_{N-k}]$ 
(of degree $s+k$, by Theorem~\ref{thm:4s}), 
since $S$ is of minimal degree $(N-k+1)$ which is also the number
of independent linear forms of $\mathbb C[x_{0},\dotsc, x_{N-k}]$.

Conversely, 
let us suppose that our $s$-subcanonical variety $X$ is such that for every
$k$-tuple of general sections
$\eta_1,\dotsc, \eta_k\in H^{0}(X,\cL)$, there is a zero-dimensional scheme of length $\deg S$, 
$\Gamma_{\eta_1,\dotsc,\eta_k}\subset \p^{N-k}:=
V(\eta_1,\dotsc, \eta_k)$ with 
$I(\Gamma_{\eta_1,\dotsc,\eta_k})\subset I(X,\eta_1,\dotsc,\eta_k)$, 
or, in other words, by the 
Apolarity Lemma~\ref{lem:apo}, $\Gamma_{\eta_1,\dotsc,\eta_k}$
is apolar to a Fermat $(s+k)$-tic 
$F_{\eta_1,\dotsc,\eta_k}\in \mathbb C[x_0,\dotsc,
x_{N-k}]$. 

First of all, up to changing coordinates, we can assume that $\eta_1=\de_{N-k+1}, \dotsc, \eta_k=\de_N$. 
Then $\frac{\CC[\de_0,\dotsc,\de_N]}{(I(X),\de_{N-k+1}, \dotsc,\de_N)}$, is Artinian Gorenstein, and can be 
considered as a quotient of the polynomial ring $\CC[\de_0,\dotsc,\de_{N-k}]$. 
Up to a change of coordinates, we can suppose that  
$F_{\de_{N-k+1},\dotsc,\de_N}:=x_0^{s+k}+\dotsb+x_{N-k}^{s+k}$, and we can think of it as a polynomial in  
$\CC[x_0,\dotsc,x_{N-k}]$. 
We now find 
$F_{\de_{N-k+1},\dotsc,\de_N}^\perp$ as an ideal in  $\CC[\de_0,\dotsc,\de_{N-k}]$. 
It is easy to see that 
\begin{equation}\label{eq:riducibs}
F_{\de_{N-k+1},\dotsc,\de_N}^\perp=(\de_i\de_j, \de_i^{s+k}-\de_j^{s+k}),
\quad i,j\in\{0,\dotsc,N-k\},\quad i\neq j, 
\end{equation}
since, by hypothesis, $s+k>2$. 
Then the quadrics of $I(X)$ are of the form 
\begin{equation*}
Q_{i,j}:=\de_i\de_j+\de_{N-k+1}L_{i,j}^1+\dotsb+\de_{N}L_{i,j}^k, 
\end{equation*}
where the $L_{i,j}^\ell$'s are linear forms on $\check\p^N$. 
In 
particular, the vector space of the quadrics vanishing on $X$ has 
dimension $\binom{N-k+1}{2}$; that is: 
$h^0(\p^{N},\II_X(2))= \binom{N-k+1}{2}$.
Since $X$ is PN it follows $(k+1)N-\binom{k}{2}+1 =h^0(X,\OO_X(2))$. 
Let $C$ be a general curve section of $X$. Hence  $3N-3k+6=h^0(C,\OO_C(2))$ 
and 
we can proceed as in Castelnuovo's analysis of curves of maximal 
genus often called 
\emph{Castelnuovo curves}: see 
for example \cite[pages 527--533]{GH}. In fact, 
since $C$ is non-degenerate, no quadric containing $C$ can 
contain a hyperplane $H\cong\check\p^{N+1-k}$, and therefore, 
if we set $\gamma:=C\cap H$, the natural restriction map 
$H^0(\p^{N+2-k},\II_C(2))\to H^0(H,\II_\gamma(2))$ is an 
isomorphism (recall that $C$ is also linearly normal). 

From this, we infer that the points of $\gamma$ impose only 
$h^0(H,\OO_H(2))-h^0(H,\II_\gamma(2))= 2(N+2-k)-1$ conditions 
on quadrics. By Clifford's Theorem, $h^0(C,\OO_C(2))-1\le\deg(C)$, 
therefore $\deg(C)\ge 3(N+2-k)-1>2(N+2-k)+1$ 
since $N>k$. By Castelnuovo's Lemma (see for example 
\cite[page 120]{ACGH}), if $H$ is generic, it follows that $\gamma$ is 
contained in a unique rational normal curve $D$. 

Since $\gamma$ consists of more than $2\deg(D)=2(N+2-k)-2$ points, a 
quadric contains $\gamma$ if and only if it contains $D$. 
Finally, we recall that a rational normal curve in $\p^{N +1-k}$ 
is the intersection of $\binom{N+1-k}{2}$ quadrics, and therefore the intersection 
of the quadrics containing $X$ meets $H\cong\p^{N+1-k}$ exactly in $D$. 
Thus, the intersection of the quadrics containing $X$ is an 
irreducible $k$-dimensional variety $Y$ of minimal degree.
By Bertini's classification theorem of the varieties of minimal degree, see \cite{ei}, it follows that $Y$ is 
a quadric or a RNS or a $k$-dimensional cone on the Veronese surface 
$v_{2}(\p^{2})$.
\end{proof}

\begin{rmk}\label{superficiedidel pezzo} The hypothesis of Theorem \ref{miofratellos}
concerning $s$ cannot be weakened; in fact, 
consider the del Pezzo surface $(Y,\cL)=(\p^{2},\OO_{\p}(3))$. Let $X:=j(Y)$ 
    where $j:=\phi_{|H|}$. 
$X\subset\p^9$ is $(-1)-aG$ but since $X$ 
does not contain plane curves, it cannot be 
contained in a rational normal threefold of $\p^9$. 
\end{rmk}

\begin{rmk}
As we noted in the introduction, the hypothesis $k>2$ is crucial, in the sense that if $k=2$ there are subcanonical curves contained in 
rational normal scrolls which are not PN. For example, 
take a smooth curve $C\in\abs{5(C_0+f)}$ on the rational
normal scroll $S_{1,2}\subset\p^4$, where 
$C_0^2=-1$ is the section at 
infinity and $f$ is the fibre. Then $C$ is $1$-subcanonical of genus $6$ and degree $10$: it is a projection of a 
canonical curve of $\p^5$, and therefore $C$ is not linearly normal. 
\end{rmk}
 
\begin{cor}
If $X$ is a $(k-1)$-dimensional $s$-subcanonical variety with crepant resolution contained as a divisor in a variety of minimal degree $S$ 
such that $s+k>2$, 
such that $X$ is not aCM, then $k=2$, \ie $X$ is a curve and $S$ is a rational normal scroll $S_{a_1,a_2}$ with $a_1\neq a_2$. 
\end{cor}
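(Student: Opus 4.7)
The plan is to apply the Main Theorem in contrapositive form in order to dispose of the case $k>2$, and then to handle the residual surface case $k=2$ by invoking the aG results proved earlier in the paper for each type of surface of minimal degree. Under the hypotheses of the corollary we have $X$ contained as a codimension-one subvariety of a variety of minimal degree $S$, with $s$-subcanonical structure, crepant resolution, and $s+k>2$. If $k>2$, then the full set of hypotheses of Theorem \ref{miofratellos} is met and the Main Theorem forces $X$ to be $s$-subcanonically regular; by Definition \ref{subcanonicallyreg} together with Proposition \ref{senzanome}, this gives $X$ aG, in particular aCM, contradicting the assumption. The case $k=1$ is ruled out because zero-dimensional schemes are automatically aCM (see the comment after Definition \ref{definizioneuno}). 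Hence $k=2$ and $X$ is a curve.

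The next step is to check that in the $k=2$ case the surface $S$ must actually be a rational normal scroll $S_{a_1,a_2}$. By Bertini's classification (see \cite{ei}) a surface of minimal degree is either the Veronese surface $v_{2}(\p^{2})$, a quadric, or a rational normal scroll. The quadric case is excluded by Lemma \ref{quadrica}, which would make $X$ aG. The Veronese case is excluded by Proposition \ref{precedente} with $M=n=2$: then $X=v_{2}(Y)$ for some plane curve $Y$ and $X$ would again be aG. Thus $S=S_{a_1,a_2}$.

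The final step is to rule out the balanced case $a_1=a_2$. For $a_1=a_2=1$, $S_{1,1}$ is a smooth quadric, already excluded. For $a_1=a_2=a\geq 2$, the scroll $S_{a,a}$ is smooth, so the resolution $j\colon\p(\cE)\to S$ of Subsection \ref{Ferrario} is an isomorphism and all higher direct images $R^{i}j_{\ast}$ vanish trivially. The cohomological proof of Proposition \ref{miazias} Case 1 then applies unchanged once one replaces the appeal to Lemma \ref{lem:pic} by its $k=2$ counterpart, proved in \cite[Lemma 15]{DZ2}. I expect this last step to be the main obstacle: one has to verify carefully that the divisor-class formula of \cite[Lemma 15]{DZ2}, combined with the vanishings $h^{i}(\p(\cE),\OO_{\p(\cE)}(tH))=0$ obtained via the projection formula and the splitting $\pi_{\ast}\OO_{\p(\cE)}(tH)\cong\sym^{t}\cE$ over $\p^{1}$, really yields $X$ aG in the balanced case, thereby producing the required contradiction and forcing $a_1\neq a_2$.
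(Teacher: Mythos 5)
Your handling of the cases that the paper's citations actually cover coincides with the paper's own proof, which is nothing more than an appeal to Lemma \ref{quadrica}, Corollary \ref{conovero}, Proposition \ref{miazias} and Theorem \ref{miofratellos}: for $k>2$ these results make $X$ aG, hence aCM, and for $k=2$ the quadric and Veronese cases are excluded exactly as you do (one small point: since Theorem \ref{miofratellos} is stated for $X$ embedded by the complete system $\abs{\cL}$, while the $X$ of the corollary is precisely suspected of not being linearly normal, it is cleaner to quote Lemma \ref{quadrica}, Corollary \ref{conovero} and Proposition \ref{miazias} directly, as the paper does). The genuine gap is the step you yourself flag: excluding the balanced scrolls $S_{a,a}$ with $a\ge 2$. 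None of the four cited results applies there (Proposition \ref{miazias} and Theorem \ref{miofratellos} require $k>2$, Lemma \ref{quadrica} only covers $S_{1,1}$), and that exclusion is exactly the content of the conclusion $a_1\neq a_2$; leaving it as an expectation means the statement is not proved.

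Moreover, the repair you sketch cannot work as written: on the smooth scroll $S_{a,a}\cong\p^1\times\p^1$ the map $j$ is an isomorphism, so the crepancy hypothesis is vacuous and cannot pin down the class of $X$ the way Lemma \ref{lem:pic} does for $k>2$; a crepancy-driven ``$k=2$ counterpart of Lemma \ref{lem:pic}'' does not exist, as the paper's own example $C\in\abs{5(C_0+f)}\subset S_{1,2}$ of a subcanonical curve whose class is not $sH-K$ shows. What saves the balanced case is balancedness itself. Put $D:=K_S+X-sH$, so subcanonicity says $\OO_X(D)\cong\OO_X$; since $s\ge 1$ here ($s+k>2$, $k=2$), one has $h^1(S,\OO_S(D-X))=h^1(S,K_S-sH)=h^1(S,sH)^{\vee}=0$ by K\"unneth, so the trivialising section of $\OO_X(D)$ lifts to a section of $\OO_S(D)$ whose zero divisor is effective and disjoint from $X$; on $\p^1\times\p^1$ no nonzero effective divisor is disjoint from an irreducible curve meeting both rulings (and the ruling curves themselves are aCM), whence $D\sim 0$ and $X\sim (s+2)H+(2-f)F$ with $f=2a\ge 2$. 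Only after this class determination does the cohomological computation of the first case of Proposition \ref{miazias} (with $j=\mathrm{id}$) apply and give $X$ aCM, hence aG, producing the contradiction. It is precisely this lifting/disjointness step that fails on unbalanced scrolls, where $X$ can be disjoint from the negative section (that is the mechanism behind the $S_{1,2}$ example), so this is where $a_1\neq a_2$ genuinely enters; if you prefer to quote \cite[Lemma 15]{DZ2}, you must check that it asserts this class determination on balanced scrolls, and not merely the crepancy computation that is empty in the smooth case.
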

\begin{proof}
This follows from Lemma \ref{quadrica}, Corollary \ref{conovero}, Proposition \ref{miazias} and Theorem \ref{miofratellos}.  
\end{proof}
\subsection{The case $s+k=2$}
We recall that a  \emph{del Pezzo variety} 
is a pair $(X,H)$, where $X$ is a projective $n$-dimensional 
variety $X$ with only Gorenstein (not necessarily normal) 
singularities and $H$ is an ample Cartier divisor on it 
such that $-K_X=(n-1)H$, and $h^i(X,\OO_X(jH))=0$, for 
all $i,j\in\ZZ$ with $0\le i\le n$. 

For example, by the Kawamata-Viehweg theorem, 
if $X$ is a Gorenstein Fano variety of index $n-1$ with at most canonical 
singularities, it is a del Pezzo variety. 

For a normal Gorenstein del Pezzo variety $X$ of codimension 
$e\geq 2$ it clearly holds that $s+k=2$, but our
Theorem \ref{miofratellos} 
fails in this case for trivial reasons since in Equation \ref{eq:riducibs} the terms $\de_i^{s+k}-\de_j^{s+k}$ $i\neq 
j$ and $i,j=0,\ldots, N-k$ are 
quadratic. Following the argument of the proof we can deduce that the 
homogeneous ideal of 
these projective varieties is generated by quadrics.

Finally we remark that if $X$ is a del Pezzo variety contained in a 
RNS, using the notation of 
Subsection \ref{Ferrario}, it easily follows that either $X\in \abs{2H+(2-f)F}$ 
or $X$ is a hypercubic in the trivial case $k=N$, $S_{a_1,\dotsc,a_k}=\p^N$. 
In the first case, $j^{-1}(X)\in |3H-H_{0}|$ and this is a case of 
Fujita's classification see \cite[Theorem 9.17 page 82]{Fu2} of the del 
Pezzo varieties.

\providecommand{\bysame}{\leavevmode\hbox to3em{\hrulefill}\thinspace}
\providecommand{\MR}{\relax\ifhmode\unskip\space\fi MR }
\providecommand{\MRhref}[2]{%
\href{http://www.ams.org/mathscinet-getitem?mr=#1}{#2}
}
\providecommand{\href}[2]{#2}


\begin{thebibliography}{ACGH85}

\bibitem[ACGH85]{ACGH}
E. Arbarello, M. Cornalba, P. Griffiths, and J. Harris,
\emph{Geometry of algebraic curves. {V}ol. {I}}, Grundlehren der
Mathematischen Wissenschaften [Fundamental Principles of Mathematical
Sciences], vol. 267, Springer-Verlag, New York, 1985.

\bibitem[AS78]{AS}
E. Arbarello and E. Sernesi, \emph{Petri's approach to the study 
of the ideal associated to a special divisor},  Invent. Math.  \textbf{49}  (1978), no. 2, 99--119. 




\bibitem[BCN11]{BCN}
E. Ballico, G. Casnati, and R. Notari, \emph{Canonical curves with low apolarity},  
 J. Algebra \textbf{332} (2011), 229--243.



\bibitem[BH93]{BH}
W. Bruns and J. Herzog, 
\emph{Cohen-Macaulay rings}, Cambridge Studies in Advanced Mathematics, 39. 
Cambridge University Press, Cambridge, 1993. 

 
%

\bibitem[CRV01]{cv}
A. Conca, M. E. Rossi and G. Valla, 
\emph{Gr\"obner flags and Gorenstein algebras}
Compositio Math. \textbf{129} (2001), no. 1, 95--121. 





\bibitem[DZ11]{DZ1}
P. De Poi and F. Zucconi, 
\emph{Gonality, apolarity and hypercubics}, 
Bull. London Math. Soc. \textbf{43} (2011), 849--858.  


\bibitem[DZ2]{DZ2}
\bysame, 
\emph{Fermat hypersurfaces and Subcanonical curves}, 
Internat. J. Math. \textbf{22} (2011) no. 12, 1763--1785. 

\bibitem[EI85]{ei}
D. Eisenbud and J. Harris, \emph{On varieties of minimal degree. 
(A centennial account)}, 
Algebraic geometry, Proc. Summer Res. Inst., Brunswick/Maine 1985, part 1, Proc. Symp. Pure Math. 46, 3--13 (1987). 





\bibitem[Fer01]{Fe}
R. Ferraro, 
\emph{Weil divisors on rational normal scrolls}, 
Geometric and combinatorial aspects of commutative algebra (Messina, 1999), 183--197,
Lecture Notes in Pure and Appl. Math., 217, Dekker, New York, 2001. 


\bibitem[Fuj90]{Fu2}
 T. Fujita, 
\emph{Classification theories of polarized varieties},
London Mathematical Society Lecture Note Series, 155. Cambridge University Press, Cambridge, 1990. 

\bibitem[Ger96]{Ger}
 A. V.  Geramita, 
\emph{Inverse System of fat points: Waring Problem, Secant Varieties 
and Veronese Varieties and Parametric Spaces of Gorenstein Ideals},
Queen's Papers in Pure ans applied Mathematics no. 102, The Curves 
seminar at Queen's (1996), Vol. X, 3--114.

\bibitem[GR70]{GR} H. Grauert and O.  Riemenschneider, 
\emph{Verschwindungss\"atze f\"ur analytische Kohomologiegruppen auf komplexen R\"aume}, Invent. Math. \textbf{11} (1970), 263--292.


\bibitem[Gre82]{G}
M. L. Green, \emph{The canonical ring of a variety of general type}, 
Duke Math. J. \textbf{49} (1982) no.~4, 1087--1113.





\bibitem[GH94]{GH}
P. Griffiths and J. Harris \emph{Principles of algebraic geometry}. Wiley Classics Library. 
John Wiley \& Sons, Inc., New York, 1994, Reprint of the 1978 original.


\bibitem[Har83]{H}
R. Hartshorne, \emph{Algebraic geometry}, {Graduate Texts in Mathematics},
vol.~52, Springer-Verlag, New York-Heidelberg-Berlin, 1983, {Corr. 3rd
printing}.

\bibitem[IK99]{ik}
A. Iarrobino and V. Kanev, \emph{Power sums, {G}orenstein algebras,
and determinant loci}, Lect. Notes Math., vol. 1721, Springer, Berlin, 1999.


\bibitem[Kol95]{Ko} J. Koll\'ar, 
\emph{Singularities of pairs}, Algebraic geometry---Santa Cruz
1995, 221--287, Proc. Sympos. Pure Math., 62, 
Part 1, Amer. Math. Soc., Providence, RI, 1997.

\bibitem[Mac16]{Mac}
F. S. Macaulay, 
\emph{The algebraic theory of modular systems. 
With a new introduction by Paul Roberts}, 
Reprint of the 1916 orig. 
Cambridge Mathematical Library. Cambridge: Cambridge University Press, (1994). 




\bibitem[Mig98]{Mig}
J. C. Migliore,
\emph{Introduction to liaison theory and deficiency modules}, 
Progress in Mathematics, vol.~165, Birkh\"auser, Boston, Mass., 1998. 





 \bibitem[Rei83]{Re} M. Reid, \emph{Minimal models of canonical threefolds}, 
 Algebraic Varieties and Analytic Varieties , Tokyo, 
 1981, Adv. Stud. Pure Math. Vol. 1, ed. S. Iitaka, (1983) 131--180. 






 \bibitem[Sch86]{Sch}
 F. Schreyer, \emph{Syzygies of canonical curves and special linear
 series}, Math. Ann. \textbf{275} (1986), 105--137.


 \bibitem[Vak06]{Va}
R. Vakil,
\emph{Murphy's law in algebraic geometry: badly-behaved deformation spaces}, 
Invent. Math. \textbf{164} (2006), no. 3, 569--590. 


\end{thebibliography}
\end{document}